\pretocmd{\tableofcontents}{\hypersetup{linkcolor=blue}}{}{}
\apptocmd{\tableofcontents}{\hypersetup{linkcolor=red}}{}{}
\newtheorem{theorem}{Theorem}[section]
\newtheorem{corollary}[theorem]{Corollary}
\newtheorem{lemma}[theorem]{Lemma}
\newtheorem{proposition}[theorem]{Proposition}
\newtheorem{definition}[theorem]{Definition}
\newtheorem{conjecture}[theorem]{Conjecture}
\newtheorem{remark}[theorem]{\bf{Remark}}
\numberwithin{equation}{section}
\newcommand{\bp}{\bar{\partial}}
\newcommand{\beq}{\begin{equation}}
	\newcommand{\eeq}{\end{equation}}
\newcommand{\beqn}{\begin{equation*}}
	\newcommand{\eeqn}{\end{equation*}}
\newcommand{\C}{\mathbb{C}}
\newcommand{\R}{\mathbb{R}}
\newcommand{\N}{\mathbb{N}}
\newcommand{\pp}{\partial\bar{\partial}}
\newcommand{\p}{\partial}
\newcommand{\w}{\omega}
\newcommand{\im}{\sqrt{-1}}
\newcommand{\Aut}{\mathrm{Aut}}
\newcommand{\rd}{\mathrm{rd}}
\newcommand{\loc}{\mathrm{loc}}
\newcommand{\mF}{\mathcal{F}}
\newcommand{\mO}{\mathcal{O}}
\newcommand{\mC}{\mathcal{C}}
\DeclareMathOperator \rank{rank}
\DeclareMathOperator \tr{tr}
\DeclareMathOperator{\sca}{S}
\DeclareMathOperator{\rc}{Ric}
\DeclareMathOperator{\spaned}{span}
\DeclareMathOperator{\hsc}{H}
\title[Partially semi-positive curvature]{Compact K\"ahler manifolds with partially semi-positive curvature}
\author{Shiyu Zhang}
\address{School of Mathematical Sciences, University of Science and Technology of China,
	Hefei, 230026, P.R. China}
\email{{\tt shiyu123@mail.ustc.edu.cn}}
\author{Xi Zhang}
\address{School of Mathematics and Statistics,
	Nanjing University of Science and Technology,
	Nanjing, 210094, P.R.China}
\email{{\tt mathzx@njust.edu.cn}}
\date{\today}
\subjclass[2020]{Primary 32J25. Secondary 32Q10, 53C12}
\keywords{K\"ahler manifolds,\ BC-p positivity,\ intermediate curvature,\ Maximally rationally connected fibrations.}
\begin{document}
\begin{abstract}
	In this paper, we study MRC fibrations of compact K\"ahler manifolds with partially semi-positive curvature. We first prove that a compact K\"ahler manifold is rationally connected if the tangent bundle is BC-$p$ positive for all $1\leq p\leq \dim X$.  As applications, we confirm a conjecture asserting that any compact K\"ahler manifold with positive orthogonal Ricci curvature must be rationally connected, and generalize a result of Heier-Wong and Yang to the conformally K\"ahler case. The second main result concerns structure theorems for two intermediate curvature conditions. We prove that, a compact K\"ahler manifold with $k$-semi-positive Ricci curvature or semi-positive $k$-scalar curvature, either the rational dimension $\geq n-k+1$ or it admits a locally constant fibration $f:X\rightarrow Y$ such that the fibre is rationally connected and the image $Y$ is Ricci-flat.
\end{abstract}

\maketitle
\tableofcontents

	\section{Introduction and statements}
	This paper is delicated to study the structure of a compact K\"ahler manifold with certain partially semi-positive curvature condition through the MRC fibration. The terminology ``partially semi-positive curvature conditions'' refer to semi-positive differential geometric curvature conditions that may allow negative curvature in some directions. For clarifying our arguments, the introduction will be divided into parts.
	\subsection{BC positivity and rational connectedness}
     A compact complex manifold $X$ is said to be {\em rationally connected} if any two points of $X$ can be joined by a chain of rational curves. First introduced in the foundational work of \cite{KMM}, such manifolds play a central role in birational geometry (see \cite{De2001} for a detailed account). A recent work of Li-Zhang-Zhang \cite{LZZ21} established that a compact K\"ahler manifold is rationally connected if and only if its tangent bundle is mean curvature positive.
    
     \vspace{0.2cm}
    
     The first objective of this paper is to establish a new differential-geometric criterion for rational connectedness using {\em BC-$p$ positivity}, which is introduced by L. Ni in \cite{ni2021}. We note that in the case $p=1$, this coincides with the RC-positivity defined earlier in \cite{yang2018}.
	\begin{definition}\label{defn-BC-positivity}
		Let $E$ be a holomorphic vector bundle over a complex manifold $X$. We say that $E$ is BC $p$-positive if $E$ admits a Hermitian metric $h$ such that for any $x\in X$ and any $p$-dimensinal subspace $\Sigma\subset E_x$, there exists $v\in T_{X,x}$ such that
		$$\sum\limits_{i=1}^pR(v,\bar{v},e_i,\bar{e_i})=p\fint_{u\in\Sigma,|u|_h=1}R(v,\bar{v},u,\bar{u})d\mu(u)>0,$$
		where $R$ is the Chern curvature tensor of $(E,h)$ and $\{e_i\}_{i=1}^p$ is a unitary basis of $\Sigma$.
	\end{definition}
	
    \noindent Regarding the tangent bundle, BC-$p$ positivity is a very weak curvature condition that can be guaranteed by a wide range of curvature positivity conditions in differential geometry, see \cite[pp. 281, 291–292]{ni2021}, as well as our discussions in Section \ref{BC-application}. By a clever consideration to Whitney's comass, L. Ni proved that BC-$p$ positivity of the tangent bundle implies $h^{p,0}$=0 in \cite{ni2021}. In particular, BC-$2$ positivity ensures the projectivity by Kodaira's embedding Theorem \cite{Kod}. Besides this, the possible consequences of geometric significance remain largely unexplored.
    
    \vspace{0.2cm}
    
    The first main result of this paper is
    \begin{theorem}\label{main-thm-BC-p-1}
    	Let $X$ be a compact complex manifold with BC-$p$ positive tangent bundle for some $p\geq1$. Suppose that $f:X\dashrightarrow Y$ is a dominant meromorphic map between compact complex manifolds such that $\dim Y= p\leq \dim X$, then $K_Y$ is not pseudo-effective.
    \end{theorem}
	\noindent The basic idea is proving that $\log$ of the Hermitian ratio $\psi$--a naturally arising quantity associated with $f^*K_Y$--is strictly subharmonic along some direction at the maximum point of $\psi$ if $K_Y$ is pseudo-effective (Proposition \ref{prop-subharmonic}). The key innovation is that BC-$p$ positivity emerges naturally in a Bochner-type formula (Proposition \ref{prop-Bochner}) as the weakest possible differential-geometric curvature condition of the tangent bundle under which Proposition \ref{prop-subharmonic} remains valid. In the proof, the geometric properties of the meromorphic map takes a similar role as the comass.
	 
	 \vspace{0.1cm}
	 
	 As a direct consequence of Theorem \ref{main-thm-BC-p-1}, Ou's criterion for uniruledness \cite{Ou25} and the existence of the maximal rationally connected fibration (MRC fibration) by Campana \cite{Cam04}, we obtain
	
	\begin{theorem}\label{main-thm-BC-p-2}
		Let $X$ be a compact K\"ahler manifold of dimension $n$. Suppose that for some $1\leq k\leq n$, the tangent bundle is BC-$p$ positive for all $p$ with $k\leq p\leq \dim X$. Then the rational dimension $\rd(X)\geq n-k+1$. In particular, $X$ is rationally connected when $k=1$.
	\end{theorem}
	\noindent The rational dimension $\rd(X)$ is defined as the dimension of the generic fibre of some MRC fibration of $X$, which is a bimeromorphic invariant of $X$. Since positivity of the mean curvature implies BC-$p$ positivity for all $p\geq1$ (Proposition \ref{prop-relationship}), combining Li-Zhang-Zhang's work \cite{LZZ21}, we obtain a new differential-geometric criterion for rational connectedness
	
	\begin{theorem}\label{main-thm-BC-RC}
		A compact K\"ahler manifold is rationally connected if and only if $T_X$ is BC-$p$ positive for every $1\leq p\leq n$.
	\end{theorem}
    \noindent ``$T_X$ is BC-$p$ positive for $1\leq p\leq n$'' can be directly implied by the positivity of several curvature conditions, including well-known holomorphic sectional curvature (more generally, $k$-Ricci curvature in \cite{nicpam}), uniformly RC-positivity in \cite{yang2019}, mean curvature, etc. While rational connectedness under these curvature conditions was previously established in \cite{Cam92,KMM,ni2021,yang2018,yang2019}, Theorem \ref{main-thm-BC-RC} provides a novel and unified approach to proving this property. In the following, we will demonstrate that this criterion provide new perspectives in the broader settings.
    
    \vspace{0.3cm}
    
    For any $v\in T_{X,x}$ with $|v|=1$, the orthogonal Ricci curvature $\rc^\perp(v,\bar{v})$ is defined as $$\rc^\perp(v,\bar{v}):=\rc(v,\bar{v})-\frac{R(v,\bar{v},v,\bar{v})}{|v|^2}=\sum\limits_{i=1}^{n-1}R(v,\bar{v},e_i,\overline{e_i}),$$
    where $\{v,e_1,\cdots,e_{n-1}\}$ is a unitary frame of $T_{X,x}$. This curvature positivity has been well studied in \cite{NWZ21,NZ19,ni2018,ni2021} motivated by studying Comparison theorems \cite{ni2018} and the generalize Hartshorne conjecture (see e.g. \cite{CP91}). As a first application of Theorem \ref{main-thm-BC-RC}, we provide an affirmative answer to a conjecture by Ni-Wang-Zheng \cite{NWZ21}.
	
	\begin{theorem}\label{main-thm-orthogonal-positive}
		A compact K\"ahler manifold with $\rc^\perp>0$ is rationally connected.
	\end{theorem}
    
    \noindent This statement can be extended to the quasi-positive case via a Bochner-type integral inequality \eqref{equa-integral-inequality-1} (see Corollary \ref{coro-orthogonal-quasi-positive}).

    \vspace{0.3cm}
    
    As a second application, we generalize Yau's conjecture on positive holomorphic sectional curvature \cite[Problem 47]{Yau82}--originally proved by Heier-Wong and Yang in \cite{heier2020,yang2018}--to the conformally K\"ahler case.
	\begin{theorem}\label{main-thm-HSC-conformallyKahler}
		A compact conformally K\"ahler manifold with positive holomorphic sectional curvature is rationally connected.
	\end{theorem}

	\noindent The proof is different from \cite{heier2020,yang2018,yang2019} in the K\"ahler case. Recall that Yang's proof relies on establishing that $(\Lambda^pT_X,\Lambda^pg)$ is RC-positive, then follows from his result that RC-positivity of $\Lambda^pT_X$ for every $1\leq p\leq n$ implies the rational connectedness. However, it cannot be verified in our setting because the variational argument employed by Yang fails to apply in this case. Instead, our novel approach involves constructing, for each $1\leq p\leq n$, a Hermitian metric $h_p$ that makes $(T_X,h_p)$ BC-$p$ positive instead of $\Lambda^p T_X$ RC-positive (Proposition \ref{prop-conformallykahler-BC}). Theorem \ref{main-thm-HSC-conformallyKahler} then follows directly from Theorem \ref{main-thm-BC-RC}. This verifies a special case of the following conjecture that the Hermitian metric is conformally K\"ahler and its curvature is strictly positive.
	
	\begin{conjecture}\label{main-conj-HSC-hermitian}(Yang, \cite[Conjecture 1.9]{yang2019})
		A compact K\"ahler manifold $X$ admits a Hermitian metric $h$ with quasi-positive holomorphic sectional curvature is projective and rationally connected.
	\end{conjecture}
	
	From the algebraic perspective of the curvature tensor at one point, BC-$p$ positivity of $(T_X,h)$ is strictly weaker than RC-positivity of $(\Lambda^pT_X,\Lambda^p h)$ (see Lemma \ref{lem-BC-RC}). Actually, Yang proposed a stronger conjecture in \cite{yang2018}.
	\begin{conjecture}[\cite{yang2018}]
		A compact K\"ahler manifold is rationally connected if and only if $T_X$ is BC-1 positive.
	\end{conjecture}
	\noindent By Theorem \ref{main-thm-BC-RC}, this conjecture is equivalent to that BC-1 positivity of the tangent bundle would imply BC-$p$ positivity for all $p\geq2$.
	
	\subsection{Structure theorems for immediate curvature conditions} As a natural extension of the study of compact K\"ahler manifolds with strictly positive curvature, investigations into the structure theorems of those satisfying semi-positive curvature conditions have garnered considerable attention in recent decades. Specifically, the study on semi-positive case of holomorphic bisectional curvature \cite{mok,HSW,SS80}, Ricci curvature \cite{campana2015} (or more generally, nef anti-canonical bundle \cite{CH19,MWWZ25}) and holomorphic sectional curvature \cite{matsumura2022,ZZ25,matsumura2025} have reached a satisfactory level of development. We refer the reader to a survey written \cite{matsumurasurvey} for a more detailed account. A natural question in this direction is whether analogous structure theorems hold under weaker differential-geometric curvature positivity of the tangent bundle, such as $BC$-semi-positivity. It is indeed unclear whether BC-$2$ quasi-positivity implies $h^{2,0}=0$, which was posed by L. Ni in \cite{ni2025}. Motivated by this question and Theorem \ref{main-thm-BC-RC}, it's natural to ask {\em whether a compact K\"ahler manifold is rationally connected if the tangent bundle is BC-$p$ quasi-positive for every $p\geq1$.} These questions remain quite challenging at the moment.
	
	\vspace{0.1cm}
	
	The second main objective of this paper is to investigate the geometric structure of compact K\"ahler manifolds under two natural immediate curvature conditions: $k$-semi-positive Ricci curvature and semi-positive $k$-scalar curvature.
	\begin{definition}(c.f. \cite[Section 2.1]{heier2020})
		Let $X$ be a compact complex manifold of dimension $n$ with a K\"ahler metric $g$.
		\begin{itemize}
			\item[(1)] Ricci curvature is said to be {\em $k$-semi-positive} if at each point $x\in X$ the sum of any $k$ eigenvalues of Ricci curvature is non-negative, which is equivalent to that for any $k$-dimensional subspace $\Sigma\subset T_{X,x}$
			$$\fint_{v\in \mathbb{S}^{2k-1}\subset \Sigma}\rc(v,\bar{v})d\mu(v)=\sum\limits_{i=1}^k\rc(e_i,\overline{e_i})\geq0,$$
			where ${e_i}$ is a unitary basis of $\Sigma$. We denote it by $\rc\geq_k0$.
			\item[(2)] For any $x\in X$ and any $k$-dimensional subspace $\Sigma\subset T_{X,x}$, $k$-scalar curvature along $\Sigma$ is defined by
			$$\sca_k(x,\Sigma):=\frac{k(k+1)}{2}\fint_{v\in \mathbb{S}^{2k-1}\subset \Sigma} R(v,\bar{v},v,\bar{v})d\mu(v)=\sum\limits_{i,j=1}^kR(e_i,\overline{e_i},e_j,\overline{e_j}),$$
			where ${e_i}$ is a unitary basis of $\Sigma$. We say that $k$-scalar curvature is semi-positive if at each point $x\in X$, $S_k(x,\Sigma)\geq0$ for any $k$-dimensional subspace $\Sigma\subset T_{X,x}$.
		\end{itemize}
	\end{definition}

    \noindent $\rc\geq_k0$ interpolates between semi-positive Ricci curvature $\rc\geq0$ and semi-positive scalar curvature $\sca\geq0$. $\sca_k\geq 0$ interpolates between semi-positive holomorphic sectional curvature $\hsc\geq0$ and semi-positive scalar curvature $\sca\geq0$. Clearly, $\rc\geq_k0\Rightarrow\rc\geq_{k+1}0$ for any $k\geq1$. Similarly, $\sca_k\geq0 \Rightarrow\sca_{k+1}\geq0$ (see Lemma \ref{lem-k-k+1}). The classical Bochner formula \cite{Boch1946} establishes that $k$-quasi-positive Ricci curvature implies the vanishing of $h^{p,0}$ for all $p \geq k$. A recent work of Ni and Zheng \cite{ni2022} showed that the same vanishing property holds under the assumption of positive $k$-scalar curvature. Further geometric implications were obtained by Heier and Wong \cite{heier2020}, who proved that for projective manifolds satisfying either Ricci curvature is $k$-quasi-positive or $\sca_k$ is quasi-positive, then its rational dimension $\rd(X)\geq n-k+1$.
    
    \vspace{0.1cm}
    
    The semi-positive case, however, remains unexplored, with the exception of projectivity results under restricted holonomy assumptions established by L. Ni \cite{ni2025} (see also \cite{Tang2024} for the quasi-positive case when $k=2$). We will prove the following structure theorems. For simplicity, we adopt the notations in Definition~\ref{defn-locallyconstant} for a locally constant fibration $f:X\rightarrow Y$.
    \begin{theorem}\label{main-thm-kricci}
    	Let $(X,g)$ be a compact K\"ahler manifold with $\rc\geq_k0$ (resp. $\sca_k\geq0$). Then one of the following situations occurs:
    	\begin{itemize}
    		\item $\rd(X)\geq n-k+1$.
    		\item $\rd(X)\leq n-k$. There exists a locally constant fibration $f:X\rightarrow Y$ to a compact K\"ahler manifold $Y$ such that
    		\begin{itemize}
    			\item[(1)] $Y$ admits a Ricci-flat K\"ahler metric $g_Y$ (resp. a K\"ahler metric $g_Y$ with $\rc\equiv0$ and $\sca_k\equiv0$)
    			\item[(2)] The fibre $F$ is rationally connected and admits a K\"ahler metric $g_F$ with $\rc\geq0$ (resp. a K\"ahler metric $g_F$ with $\hsc\geq0$) such that we have the following holomorphically and isometrically splitting for the universal cover 
    			$$(X_{\mathrm{univ}},\mu^*g)\simeq (Y_{\mathrm{univ}},\pi^*g_Y)\times(F,g_F).$$
    		\end{itemize}
    	\end{itemize}
    \end{theorem}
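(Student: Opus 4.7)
The plan is to deduce Theorem \ref{main-thm-kricci} from Theorem \ref{main-thm-partially} by specializing the parameters and then extracting pointwise curvature information on the two factors of the splitting. If $\rd(X) \geq n-k+1$ there is nothing to prove, so assume $\rd(X)\leq n-k$, i.e.\ $\dim Y \geq k$ for the target of any MRC fibration. For $\rc\geq_k 0$ I would choose $(a,b) = (1,0)$ and note that $\mS_{1,0,k}(x,\Sigma) = \tfrac{1}{k}\tr(\rc|_\Sigma)$, so $\mS_{1,0,k}\geq 0$ is equivalent to $\rc\geq_k 0$; for $\sca_k\geq 0$ I would choose $(a,b) = (0,1)$, so that $\mS_{0,1,k} = \sca_k$. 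Both choices satisfy $a\geq 0$ and $a + \tfrac{2b}{k+1} > 0$, and Theorem \ref{main-thm-partially} then produces a locally constant fibration $f\colon X\rightarrow Y$ with Ricci-flat base $(Y,g_Y)$, rationally connected fibre $(F,g_F)$, and an isometric holomorphic splitting $(X_{\mathrm{univ}},\mu^*g)\simeq (Y_{\mathrm{univ}},\pi^*g_Y)\times(F,g_F)$; since $Y$ is Ricci-flat it is non-uniruled (Ou), so $f$ is an MRC fibration.

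What remains is to upgrade the curvature information on the two factors. On the universal cover the metric is a K\"ahler product, hence $R_X = R_Y \oplus R_F$ with mixed components vanishing by parallelism. In the Ricci case, the eigenvalues of $\rc_X$ at each point are exactly those of $\rc_F$ together with $p = \dim Y \geq k$ zero eigenvalues coming from the Ricci-flat base; were $\rc_F$ to have a negative eigenvalue, padding that eigendirection with enough zero directions from $TY$ would yield a $k$-dimensional subspace of $T_xX$ on which $\tr(\rc_X) < 0$, contradicting $\rc_X \geq_k 0$. Hence $\rc_F \geq 0$.

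In the scalar case, restricting $\sca_k^X\geq 0$ to $k$-dimensional subspaces contained in $T_yY_{\mathrm{univ}}$ gives $\sca_k^Y \geq 0$; on the other hand, averaging $\sca_k^Y(\Sigma)$ uniformly over $k$-dimensional subspaces at a point returns a positive multiple of the scalar curvature $\sca^Y$, which vanishes by Ricci-flatness, so $\sca_k^Y \equiv 0$. To obtain the pointwise bound $\hsc_F\geq 0$ I would test $\sca_k^X\geq 0$ on mixed subspaces $\Sigma = \mathbb{C}v \oplus E$ with $v \in T_xF$ unit and $E \subset T_yY_{\mathrm{univ}}$ of complex dimension $k-1$. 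Expanding $R_X = R_Y + R_F$ on the unit sphere of $\Sigma\cong \mathbb{C}\oplus E$ and using the independence of $|\alpha|$ and the direction of $u \in E$ for the uniform distribution on the sphere, a direct moment computation yields
\[
\sca_k^X(\Sigma) \;=\; \frac{2}{k(k+1)}\,\hsc_F(v) \;+\; \frac{k-1}{k+1}\,\sca_{k-1}^Y(E).
\]
The same averaging argument applied with $k-1$ in place of $k$ shows the mean of $\sca_{k-1}^Y(E)$ over $(k-1)$-dimensional $E$ vanishes at every $y$, so I can always select $E$ with $\sca_{k-1}^Y(E) \leq 0$; substituting then forces $\hsc_F(v) \geq 0$.

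The main obstacle, and essentially the only non-routine ingredient, is this last step in the scalar case: the pointwise positivity $\hsc_F \geq 0$ is strictly stronger than the averaged $\sca_k^X \geq 0$ we begin with, and the argument succeeds only because the Ricci-flatness of $Y$ forces the mean of $\sca_{k-1}^Y$ over $(k-1)$-planes to vanish, which is precisely what lets me neutralize the $Y$-contribution in the mixed test subspace. The remaining pieces --- the product decomposition of the curvature, the Beta-distribution moments on the unit sphere of $\mathbb{C}^k$, and the continuity passage from ``nonnegative with zero mean'' to ``identically zero'' --- are standard.
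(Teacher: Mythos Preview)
Your proof is correct and follows the paper's route: specialize Theorem~\ref{main-thm-partially} to $(a,b)=(1,0)$ resp.\ $(0,1)$, then read off the refined curvature of each factor from the product decomposition $R_X=R_Y\oplus R_F$. The only notable variation is in the step $\hsc_F\geq 0$: the paper first passes to $\sca_{k+1}\geq 0$ via Berger's averaging and tests it on $\Sigma=\mathbb{C}v\oplus E$ with $\dim E=k\leq\dim Y$, so that the already-established identity $\sca_k^Y\equiv 0$ kills the $Y$-contribution outright; your variant (testing $\sca_k$ itself on $\Sigma$ with $\dim E=k-1$ and choosing $E$ so that $\sca_{k-1}^Y(E)\leq 0$, which is possible since the Grassmannian average of $\sca_{k-1}^Y$ is a multiple of the vanishing scalar curvature) is equally valid but marginally less direct.
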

    
    \noindent Theorem \ref{main-thm-kricci} generalize important earlier work: specifically, they extend the structure theorems of Campana-Demailly-Peternell \cite{campana2015} for manifolds with $\rc\geq0$, and those of Matsumura \cite{matsumura2025} for manifolds with $\hsc\geq0$.
    
    Let us explain the proof of Theorem \ref{main-thm-kricci}. Actually, we prove a structure theorem (Theorem \ref{main-thm-partially}) under the more generalized curvature condition $\sca_{a,b,k}\geq0$. The consideration of this curvature condition is chiefly for technical unification that simultaneously addresses both $\rc\geq_k0$ ($a=1,b=0$), $\sca_k\geq0$ ($a=0,b=1$) and $\rc^\perp\geq0$ ($a=1,b=-1,k=1$).
    \begin{remark}
      A recent work of Chu-Lee-Zhu \cite{CLZ25} establish the structure theorem for compact K\"ahler manifolds with semi-positive mixed curvature $\mC_{a,b}\geq0$ by generalizing Cheeger-Gromoll's splitting theorem \cite{CG71}. Note that Theoorem \ref{main-thm-partially} provide a complement for their result in the sense of rational connectedness. See a detailed discussion in Section \ref{section-application}.
    \end{remark}
    \noindent The basic strategy of Theorem~\ref{main-thm-partially} builds upon the framework established in \cite{CH19,matsumura2022} for studying structure theorems of projective manifolds with nef anti-canonical bundles or semi-positive holomorphic sectional curvature via H\"oring's foliation theory for rationally connected leaves (Lemma~\ref{lem-morphism}) with the classical Ehresmann's theory (Lemma~\ref{lem-integrability}). The key innovation lies in establishing a Bochner-type integral inequality (Proposition \ref{prop-integral-inequality-1}) for the Hermitian ratio associated with $f^*K_Y$, which requires new considerations beyond our previous work for semi-positive holomorphic sectional curvature \cite{ZZ25}.
    
    We conclude the introduction by outlining the core method of this work. The proofs of both main results—Theorem \ref{main-thm-BC-p-1} and Theorem \ref{main-thm-partially}—rest on a detailed analysis of the Hermitian ratio $\psi$ attached to a pseudo-effective subsheaf $\mathcal{F}\subset\Omega_X^p$, which is developed in Sections \ref{Hermitian ratio} and \ref{section-bochner-integral}. Central to our approach are Proposition \ref{prop-Bochner} and Proposition \ref{prop-integral-inequality-1}. Certain parts of this framework, notably Proposition \ref{prop-Bochner}(1) and Lemma \ref{lem-mixedcurvature-geometriccondition}, are established specifically for the choice $\mathcal{F}:=f^*K_Y$. Their proofs rely on a careful construction of currents associated to a plurisubharmonic function $\varphi$, as presented in Section \ref{currents-definition}, with particular attention paid to the object $e^{\varphi}\im\partial\varphi\wedge\bar\partial\varphi$. A key step is Lemma \ref{lem-current-vanishing}, which reproduces the results in \cite{Demailly02} and \cite{Wu21}.

    \vspace{0.2cm}
    
    {\em This paper is organized as follows.} In section \ref{Preliminaries}, we clarify some basic statements. In section \ref{section-BC}, we study the relationship between BC-$p$ positivity and rational connectedness. Section \ref{section-structure} is devoted to establish the structure theorems for immediate semi-positive curvature conditions.

    \subsection*{Acknowledgements} 
    The authors would like to thank Lei Ni for his interest and useful comments on this work, Kai Tang for pointing out a misunderstanding in the first version of Lemma \ref{lem-k-k+1}, and the referee for identifying several gaps in the original proofs of Proposition \ref{prop-Bochner} and Proposition \ref{prop-integral-inequality-1}. The first author also wishes to thank Shin-ichi Matsumura for his suggestions on related topics. The research was supported by the National Key R and D Program of China 2020YFA0713100. The  authors are partially supported by NSF in China No.12141104, 12371062 and 12431004.
	
    \section{Basic materials}\label{Preliminaries}
    
    In this section, we recall some basic definitions and facts of currents associated to a plurisubharmonic function, MRC fibrations and positivity of vector bundles.
    
    \subsection*{Notations and Conventions} For a Hermitian metric $g$ on a complex manifold $X$, we denote by $\omega_g$ its associated K\"ahler form; the subscript $g$ will often be omitted when no confusion can arise; $W^{k,p}(X), W^{k,p}_\loc(X)$ refer to the Sobolev spaces with respect to the volume measure induced by $\omega^n$, we emphasize this point because the other positive radon measures shall appear when proceeding positive currents. 
    
    To distinguish the pairing $\langle T,u \rangle$ of between a current $T$ and a test form $u$, we denote by $\langle\cdot,\cdot\rangle_h$ be the pairing induced by a Hermitian metric $h$ of a vector bundle $E$ and $\{\cdot,\cdot\}_h$ be the pairing of $E$-valued forms induced by $\langle\cdot,\cdot\rangle_h$ together with wedge product of forms (see e.g. \cite[Pages 262]{demaillycomplex}).
    
    The product of forms with bounded Borel measurable coefficients and currents with complex Radon measurable coefficients can be well-defined. For a $(r,s)$-form $\eta$ with bounded Borel measurable coefficients and a current $T$ of bidegree $(p,q)$, $T\wedge \eta$ refers to the current of bidegree $(p+r,q+s)$ defined by
    \[
    \langle T\wedge\eta, u \rangle := \langle T, \eta\wedge u \rangle, \
    \forall u \in C^\infty_0\bigl( X, \Lambda^{n-(p+r),n-(q+s)} T_X^* \bigr),
    \]
    which is well-defined because each coefficient of $\eta\wedge u$ are locally integrable with respect to each coefficient of $T$. In particular, $T\wedge\eta$ is well-defined when $T$ is a positive current. 

    \subsection{Currents associated to a plurisubharmonic function}\label{currents-definition}    
    For basic properties of currents and plurisubharmonic functions, we refer to \cite{Demailly02,GZ17}. Let $U$ be an open subset of $\C^n$ and  $PSH(U)$ denote the set of plurisubharmonic functions in $U$. It's well-known that
    \begin{lemma}[cf. {\cite[Proposition 2.20]{GZ17}}]
       If $\varphi \in PSH(U)$ and $\varphi \not\equiv -\infty$, then $\varphi \in L^1_\mathrm{loc}(U)$ and $\im\partial\bar\partial\varphi$ is a closed positive current.	
    \end{lemma}
    
    \noindent Since $e^{\varphi}$ is locally bounded and Borel measurable, the product $e^{\varphi}\im\partial\bar\partial\varphi$ is consequently a well-defined positive current. Here, we focus on studying the approximation properties of currents associated to $\varphi$ and giving a rigorous definition to the quantity $e^{\varphi}\im\partial\varphi\wedge\bar\partial\varphi$, which plays a crucial role in our arguments and is not defined priori. We begin with the following integrability properties.
    \begin{lemma}[cf. {\cite[Theorem 4.18]{GZ17}}]\label{lem-convergent-1}
    	$ PSH(U)\cap L_\loc^1(U)\subset W^{1,p}_\loc(U)$ for all $1\leq p<2$. If $\{\varphi_j\}$ is a sequence of functions in $PSH(U)$ such that $\varphi_j\rightarrow\varphi$ in $L_\loc^1(U)$ for some $\varphi\in PSH(U)$, then
    	\begin{itemize}    		
    		\item[(1)] $\{\varphi_j\}$ are locally uniformly bounded from above;
    		\item[(2)] $\varphi_j\rightarrow \varphi$ in $W_\loc^{1,p}(U)$.
    	\end{itemize}
    \end{lemma}
    \noindent This result is optimal, since $\log|z_1|$ is an unbounded plurisubharmonic function whose gradient does not belongs to $L_\loc^2(U)$. Nevertheless, the gradient of a plurisubharmonic function is in fact locally $L^2$-integrable with respect to the weight $e^{\varphi}$. Let us recall the standard approximations by smooth plurisubharmonic functions before the proof. 
    
    \vspace{0.1cm}
    
    Let $\{\rho_\epsilon\}_{\epsilon>0}$ be a family of smoothing kernels, i.e., $\rho_\epsilon$ is a smooth function compactly supported in $B(0,\epsilon)$ with $\int_{\C^n}\rho_\epsilon=1$ and $\rho_\epsilon$ approximates the Dirac measure at the origin. Set $U_\epsilon:=\{x\in U:d(x,\p U)>\epsilon\}$, we have
    \begin{lemma}[cf. {\cite[Proposition 1.42]{GZ17}}]\label{lem-smooth-approximation}
    	Let $\varphi\in PSH(U)\cap L^1_\loc(U)$, then $\varphi_\epsilon:=\varphi*\rho_\epsilon\in PSH(U_\epsilon)\cap C^\infty(U_\epsilon)$ decreases to $\varphi$ as $\epsilon$ decreases to $0^+$ and $\varphi_\epsilon\rightarrow \varphi$ in $L^1(K)$ for any relatively compact subset $K$.
    \end{lemma}
    
   The approximation properties listed below suffice for our subsequent computations.
    
    \begin{proposition}\label{prop-convergent}
    	Let $\varphi\in PSH(U)\cap L_\loc^1(U)$. The following statements hold for any relatively compact open subset $K\subset\subset U$ and $c>0$.
    	\begin{itemize}
    		\item[(1)] $e^{c\varphi_\epsilon}\rightarrow e^{c\varphi}$ in $W^{1,1}(K)$; in particular, $e^{c\varphi_\epsilon}\p\varphi_\epsilon\rightarrow e^{c\varphi}\p\varphi$ and $\p e^{\varphi}=e^{\varphi} \p\varphi$ in $L^1(K)$.
    		\item[(2)] $e^{c\varphi_\epsilon}\im(\pp e^{\varphi_\epsilon})\rightharpoonup e^{c\varphi}\im\pp(e^\varphi)$ in the weak topology of currents on $K$.
    		\item[(3)] $e^{c\varphi}\in W^{1,2}_\loc(U)$; in particular, $e^{c\varphi}\im\p\varphi\wedge\bp\varphi$ is a well-defined positive current via integration. Moreover, $e^{c\varphi_\epsilon}\p\varphi_\epsilon\wedge\bp\varphi_\epsilon$ converges and
    		$$e^{c\varphi}\im\p\varphi\wedge\bp\varphi\leq \lim\limits_{\epsilon\rightarrow 0} e^{c\varphi_\epsilon}\im\p\varphi_\epsilon\wedge\bp\varphi_\epsilon$$
    		in the sense of currents on $K$.
    		\item[(4)] 
    		$\im\pp e^{\varphi}-e^\varphi\im\p\varphi\wedge\bp\varphi$
    		is a positive current.
    	\end{itemize}
    \end{proposition}
    
   For all sufficiently small $\epsilon > 0$, the inclusion $K \subset \Omega_\epsilon$ holds; consequently, the limit operations discussed above are valid. Moreover, by rescaling, it is enough to establish the above statements for any fixed constant $c$. The constant is retained in the statement to facilitate direct reference and application.
    
    \begin{proof}
       Since $\lim\limits_{\epsilon\to 0} e^{\varphi_\epsilon}=e^{\varphi}$ and the family ${e^{\varphi_\epsilon}}$ is uniformly bounded on $K$, Lebesgue's dominated convergence theorem gives convergence $e^{\varphi_\epsilon} \to e^{\varphi}$ in $L^1(K)$. By Lemma \ref{lem-convergent-1}, it follows that $e^{\varphi_\epsilon}\partial\varphi_\epsilon = \partial e^{\varphi_\epsilon} \to \partial e^{\varphi}$ in $L^p(K)$ for every $1\leq p<2$. Meanwhile, we have $\partial\varphi_\epsilon \to \partial\varphi$ in $L^1(K)$; passing to a subsequence ensures almost‑everywhere convergence, and consequently $e^{\varphi_\epsilon}\partial\varphi_\epsilon \to e^{\varphi}\partial\varphi$ almost everywhere. The Vitali convergence theorem then gives $e^{\varphi_\epsilon}\partial\varphi_\epsilon \to e^{\varphi}\partial\varphi$ in $L^1(K)$ (after passing to a further subsequence). We therefore conclude that $e^{\varphi}\partial\varphi = \partial e^{\varphi}$ in $L^1(K)$, which establishes statement (1). Statement (2) follows directly from the result of Bedford–Taylor \cite[Theorem 2.1]{BT82}.

       Now let us show (3). Applying (2) gives that
       $$e^{\varphi_\epsilon}\im\p\varphi_\epsilon\wedge\bp\varphi_\epsilon=2\im\pp e^{\varphi_\epsilon}-4e^{\frac{\varphi_\epsilon}{2}}\im\pp e^{\frac{\varphi_\epsilon}{2}}$$
       converges in the sense of currents. Note that
       \begin{equation}\label{equa-smooth-sum}
       	  \im\pp e^{\varphi_\epsilon}=e^{\varphi_\epsilon}\im\p\varphi_\epsilon\wedge\bp\varphi_\epsilon+e^{\varphi_\epsilon}\im\pp\varphi_\epsilon\geq e^{\varphi_\epsilon}\im\p\varphi_\epsilon\wedge\bp\varphi_\epsilon.
       \end{equation}
       It follows that $e^{\varphi_\epsilon}\im\partial\varphi_\epsilon\wedge\bar\partial\varphi_\epsilon$ is uniformly bounded in $L^1(K)$. This can be verified by wedging with $\chi(\im\partial\bar\partial|z|^2)^{n-1}$, since the coefficients are then dominated by the trace measure; here $\chi$ is a smooth function that equals $1$ on a neighborhood of $K$ and has compact support in $U$. Moreover, the convergence $\partial e^{\frac{1}{2}\varphi_\epsilon} \to \partial e^{\frac{1}{2}\varphi}$ holds in $L^{1}(K)$ and therefore in Lebesgue measure; consequently, by passing to a suitable subsequence, $\{e^{\varphi_\epsilon}\im\p\varphi_\epsilon\wedge\bp\varphi_\epsilon\}$ converges almost everywhere. Applying Fatou's lemma, we obtain for every positive smooth $(n-1,n-1)$ forms $u$ with compact support in an open neighborhood of $K$, it holds that
       \begin{equation}\label{equa-leq-fatou}
       	0\leq \int_K e^{\varphi}\im\p\varphi\wedge\bp\varphi\wedge u\leq \lim\limits_{\epsilon\rightarrow 0} \int_X e^{\varphi_\epsilon}\im\p\varphi_\epsilon\wedge\im\bp\varphi_\epsilon\wedge u.
       \end{equation}
       Since the right-hand side is finite, we conclude that $\p e^{\frac{1}{2}\varphi}\in L^2(K)$ by taking $u=\chi(\im\pp|z|^2)^{n-1}$ where $\chi$ is chosen as before, which establishs (3). Finally, using the statement (2) together with \eqref{equa-smooth-sum} and \eqref{equa-leq-fatou}, we have
       \begin{equation}
       	   \big\langle \im\pp e^{\varphi}-e^\varphi\im\p\varphi\wedge\bp\varphi,u \big\rangle \geq \lim\limits_{\epsilon\rightarrow 0}\int_X e^{\varphi_\epsilon}\im\pp\varphi_\epsilon\wedge u\geq0,
       \end{equation}
       which verifies (4) and thus completes the proof.
    \end{proof}
     
    \subsection{MRC fibrations}
    First, let us recall the definitions of uniruledness and rational connectedness, both of which play a crucial role in birational geometry (see, e.g., \cite{De2001}).
    \begin{definition}
    	Let $X$ be a compact K\"ahler manifold. We say that
    	\begin{itemize}
    		\item $X$ is uniruled if for every general point $x\in X$, there exists a rational curve containing $x$;
    		\item $X$ is rationally connected if any two points can be connected by a rational curve.
    	\end{itemize}
    \end{definition}
    
    \begin{definition}
    	For a line bundle $L$ over a compact complex manifold $X$, we say that $L$ is pseudo-effective if it admits a singular metric $h$ with semi-positive curvature, i.e., the weight
    	of $h$ with respect to any trivialization coincides with some plurisubharmonic function almost everywhere.
    \end{definition}
    
    The result of \cite{BDPP} was extended to compact K\"ahler manifolds in the recent preprint of Ou \cite{Ou25}, which states that a compact K\"ahler manifold $X$ is uniruled if and only if $K_X$ is not pseudo-effective.  The existence of the MRC ﬁbration of a compact K\"ahler manifold follows, as in the projective case (\cite{Cam92,KMM}), from the existence of a quotient map for covering families (\cite[Theorem 2.6]{Cam04}). Combining these results, we obtain
      \begin{lemma}\label{lem-MRCfibration}
		Let $X$ be a compact K\"ahler manifold. There exists a MRC fibration $f:X\rightarrow Y$ of $X$. In particular, it is an almost holomorphic map $f:X\dashrightarrow Y$ (that is meromorphic map whose general fibres are compact) to a compact K\"ahler manifold $Y$ with $K_Y$ pseudo-effective such that $\dim X-\dim Y\geq0$ and the general fibres are rationally connected.
	\end{lemma}
    For reader's convenience, we provide a proof of Lemma \ref{lem-MRCfibration}.
    \begin{proof}
      Consider a MRC fibration $f:X\dashrightarrow Y$ of $X$ to a compact K\"ahler manifold $Y$
    	\begin{itemize}
    		\item [(1)] $f$ is an almost holomorphic map;
    		\item [(2)] General fibres are rationally connected;
    		\item [(3)] There is no horizontal rational curve passing through a general point in $X$.
    	\end{itemize}
    	In particular, $\dim X-\dim Y>0$ as $X$ is uniruled. By \cite{GHS}, any rational curves in $Y$ can be lifted into $X$ and thus $Y$ is not uniruled by the statement (3).  Then it follows from Ou's recent result \cite{Ou25} that $K_Y$ is pseudo-effective.
    \end{proof}
    
    \begin{lemma}\label{lem-pseudo-effective}
    	Let $f:X\dashrightarrow Y$ be a dominant meromorphic map between compact complex manifolds $X,Y$. For simplicity, we will always denote $(f|_{X\setminus Z})^*K_Y$ by $f^*K_Y$, where $Z$ is the indeterminacy locus of $f$.  If $K_Y$ is pseudo-effective, then the unique reflexive extension $\mathcal{F}$ of $f^*K_Y$ is pseudo-effective.
    \end{lemma}
    
    \begin{proof}
    	Since $K_Y$ is pseudo-effective, it supports a singular Hermitian metric $h$, then $\big(f^*K_Y,f^*h)$ is a pseudo-effective line bundle on $X\setminus Z$. Let $\mathcal{F}$ be the trivial reflexive extension of $f^*K_Y$, then $\mathcal{F}$ is a line bundle and is pseudo-effective since each plurisubharmonic function $\varphi_\alpha$ on $X\setminus Z$ can be uniquely extended to $X$ by the extension theorem of Grauert and Remmert (\cite[page 181]{GR56}) since $\mathrm{codim}_XZ\geq2$.
    \end{proof}
    
    We will use the following lemma to construct a MRC fibration that is holomorphic.
    \begin{lemma}(\cite[Corollary 2.11]{horing07})\label{lem-morphism}
        Let $X$ be a compact K\"ahler manifold and $V\subset T_X$ be an integrable subbundle such that one leaf is compact and rationally connnected. Then there exists a smooth submersion $X\rightarrow Y$ onto a compact K\"ahler manifold $Y$ such that $T_{X/Y}=V$.
    \end{lemma}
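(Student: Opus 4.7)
My plan is to combine the holomorphic Frobenius theorem with the deformation theory of compact rationally connected subvarieties. First, integrability of $V$ produces a holomorphic foliation $\mathcal{F}$ on $X$ whose leaves are integral submanifolds of rank $r = \rank V$; write $L_0$ for the given compact rationally connected leaf, which has normal bundle $N_{L_0/X} \simeq (T_X/V)|_{L_0}$. I would first analyze the Douady space $D := \mathrm{Douady}(X)$ at the point $[L_0]$: its Zariski tangent space is $H^0(L_0, N_{L_0/X})$, and obstructions lie in $H^1(L_0, N_{L_0/X})$. Using rational connectedness of $L_0$ together with the fact that $L_0$ is covered by free rational curves on which $T_X/V$ is globally generated, I would argue that the component $Y_0 \subset D$ parametrizing leaf-preserving deformations of $L_0$ is smooth at $[L_0]$, and that the total space of the associated universal family sweeps out an open neighborhood of $L_0$ in $X$ via a local biholomorphism.

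Next, I would promote this local fibration to a global one via an open-closed argument. Let $U \subset X$ be the locus of points lying on a compact rationally connected leaf of $\mathcal{F}$; the step above shows $U$ is open. For closedness, take a sequence $[L_t] \in Y_0$ accumulating at some boundary point $[L_\infty]$ in $D$: properness of the Douady space within a fixed cohomology class produces a limit cycle $L_\infty$, whose reduction is a leaf of $\mathcal{F}$ by closedness of the integrability condition, and rational connectedness is preserved in the limit by applying the Graber--Harris--Starr theorem to a resolution of the flat family. Hence $U$ is closed, so $U = X$, and every leaf of $\mathcal{F}$ is compact rationally connected. The leaf space $Y := X/\mathcal{F}$ then exists as a compact complex space, and the quotient map $f : X \to Y$ is a proper smooth surjection with $T_{X/Y} = V$ by construction.

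Finally, smoothness of $Y$ follows from smoothness of the local parameter spaces $Y_0$ at each $[L]$, and the K\"ahler property of $Y$ descends from that of $X$ via Varouchas' theorem on proper smooth submersions (here the fibres are even projective, as they are rationally connected). The main obstacle I foresee is the deformation-theoretic step: proving unobstructedness of leaf-preserving deformations and stability of rational connectedness under specialization. Both rely on substantive input from the theory of rational curves on rationally connected varieties (Koll\'ar--Miyaoka--Mori free curves, Graber--Harris--Starr's theorem, and the rigidity of MRC fibrations), and constitute the technical heart of H\"oring's original argument.
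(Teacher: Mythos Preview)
The paper does not give its own proof of this lemma: it is quoted verbatim as \cite[Corollary 2.11]{horing07} and used as a black box. There is therefore nothing in the paper to compare your proposal against.

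As a standalone reconstruction of H\"oring's argument, your outline is broadly in the right direction (Douady space, open--closed propagation, leaf space), but one step is not justified as written. You assert unobstructedness of $[L_0]$ in the Douady space by claiming $H^1(L_0, N_{L_0/X})=0$ via ``free rational curves on which $T_X/V$ is globally generated.'' Rational connectedness of $L_0$ gives $H^i(L_0,\mathcal{O}_{L_0})=0$ for $i>0$, but it does \emph{not} force $H^1$ of an arbitrary vector bundle on $L_0$ to vanish; global generation along free curves is likewise insufficient. H\"oring's actual route avoids this: he proves a holomorphic Reeb-type stability statement for regular foliations with a compact leaf $L$ satisfying $H^1(L,\mathcal{O}_L)=0$ (this is where rational connectedness enters), showing directly that nearby leaves are compact and diffeomorphic to $L$, and that the holonomy is finite. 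The Douady component is then automatically smooth of the correct dimension because the universal family is locally a product. Your closedness argument also overshoots: preservation of rational connectedness under specialization is not needed, since once all leaves are compact with finite holonomy the leaf space exists as a complex manifold and the remaining leaves inherit rational connectedness from being deformation-equivalent to $L_0$.
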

    
    \begin{definition}[Locally constant fibration, {c.f. \cite[Definition 2.6]{matsumurasurvey}}]\label{defn-locallyconstant}
    	Let $f:X\rightarrow Y$ be a fibration (i.e., a surjective holomorphic map with connected fibres) between compact K\"ahler manifolds.
    	\begin{itemize}
    		\item[(a)] $f$ is said to be a {\em locally trivial fibration} if for each point of $Y$ there is an open neighborhood $U\subset Y$ and a holomorphic map $f^{-1}(U)\cong U\times F$.
    		\item[(b)] Suppose that $f$ is a locally trivial fibration with fibre $F$. The fibration $f$ is said to be a {\em locally constant fibration} if there exists a representation $\rho:\pi_1(Y)\rightarrow \Aut(F)$
    		from the fundamental group $\pi_1(Y)$ of $Y$ into the group of automorphisms of $F$, such that
    		$$X\cong (Y_{\mathrm{univ}}\times F)/\pi_1(Y),$$
    		where $Y_{\mathrm{univ}}$ is the universal cover of $Y$ and $\gamma\in \pi_1(Y)$ acts on $Y_{\mathrm{univ}}\times F$ by
    		$$\gamma\cdot(y,p)=(\gamma\cdot y,\rho(\gamma)\cdot p).$$
    		In particular, we have the following commutative diagram:
    		\[
    		\begin{tikzcd}
    			F_{\mathrm{univ}} & X_{\mathrm{univ}}=Y_{\mathrm{univ}}\times F_{\mathrm{univ}} \arrow[l,"\mathrm{pr}_2"'] \arrow[r] \arrow[rd,"\mathrm{pr}_1"'] \arrow[rr,bend left=10,"\mu"] & Y_{\mathrm{univ}}\times F \arrow[r] \arrow[d] & (Y_{\mathrm{univ}}\times F)/\pi_1(Y)\cong X \arrow[d,"f"] \\
    			& & Y_{\mathrm{univ}} \arrow[r,"\pi"] & Y.
    		\end{tikzcd}
    		\]
    	\end{itemize}
    \end{definition}   
    The classical Ehresmann's theorem can be applied to to verify a holomorphic map is a locally constant fibration.
    \begin{lemma}(c.f. \cite[Theorem 3.17]{horing07})\label{lem-integrability}
        Let $f:X\rightarrow Y$ be a submersion of manifolds with an integrable connection, i.e, an integrable subbundle $W$ of $T_X$ such that $T_X=W\oplus T_{X/Y}$. Then $f:X\rightarrow Y$ is a locally constant fibration such that $\mu^*W\simeq \mathrm{pr}_1^*(T_{Y_{\mathrm{univ}}})$ and $\mu^* T_{X/Y}\cong \mathrm{pr}_2^* T_{F_{\mathrm{univ}}}$.
    \end{lemma}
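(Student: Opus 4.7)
The plan is to run the classical Ehresmann path-lifting construction on the horizontal distribution $W$ and then use integrability to upgrade ``locally trivial'' to ``locally constant'' by showing that the resulting holonomy is flat. The splitting $T_X=W\oplus T_{X/Y}$ together with $T_{X/Y}=\ker(df)$ identifies $df|_W:W\to f^*T_Y$ fibrewise, so every (holomorphic) vector field $\xi$ on $Y$ has a unique horizontal lift $\tilde\xi\in\Gamma(X,W)$. In the compact-fibre setting in which this lemma is applied, horizontal lifts of complete vector fields on $Y$ are themselves complete, so every smooth path $\gamma:[0,1]\to Y$ together with a starting point $x\in f^{-1}(\gamma(0))$ admits a unique horizontal lift $\tilde\gamma$ with $\tilde\gamma(0)=x$; this produces biholomorphic parallel transport $P_\gamma: f^{-1}(\gamma(0))\to f^{-1}(\gamma(1))$, holomorphicity following from $W$ being a holomorphic subbundle.

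The essential step is to show that $P_\gamma$ depends only on the homotopy class of $\gamma$ rel endpoints. Frobenius integrates $W$ to a holomorphic foliation $\mathcal{W}$; given a homotopy $H:[0,1]^2\to Y$ from $\gamma_0$ to $\gamma_1$ and a starting point $x$, I would construct a two-parameter horizontal lift $\tilde H:[0,1]^2\to X$ by gluing local leaves of $\mathcal{W}$, with uniqueness of leaves forcing such a lift to be well defined. Comparing boundary values yields $P_{\gamma_0}(x)=P_{\gamma_1}(x)$, so null-homotopic loops act trivially and parallel transport descends to a representation
\[
\rho:\pi_1(Y,y_0)\to \Aut(F), \qquad F:=f^{-1}(y_0).
\]

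The remaining assembly is formal. On a simply connected open $U\subset Y$ the holonomy is trivial, so assigning to $(y,x)\in U\times F$ the endpoint of the horizontal lift of a path from $y_0$ to $y$ starting at $x$ gives a biholomorphism $U\times F\cong f^{-1}(U)$, proving local triviality. Pulling $f$ back along $\pi:Y_{\mathrm{univ}}\to Y$ yields a globally trivial submersion $X\times_Y Y_{\mathrm{univ}}\cong Y_{\mathrm{univ}}\times F$, and the deck group $\pi_1(Y)$ acts diagonally by $\gamma\cdot(y,p)=(\gamma\cdot y,\rho(\gamma)\cdot p)$, so $X\cong (Y_{\mathrm{univ}}\times F)/\pi_1(Y)$. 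On the universal cover $X_{\mathrm{univ}}=Y_{\mathrm{univ}}\times F_{\mathrm{univ}}$, tracing this trivialisation identifies $\mu^*W\cong \mathrm{pr}_1^*T_{Y_{\mathrm{univ}}}$ and $\mu^*T_{X/Y}\cong \mathrm{pr}_2^*T_{F_{\mathrm{univ}}}$, matching the identifications asserted in the statement.

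The main technical obstacle is the two-parameter lifting that converts Frobenius integrability into flatness of the holonomy; this is where the ``constant'' of ``locally constant'' is genuinely used, and it is the only nonformal step. Completeness of horizontal lifts, which is what upgrades local path lifting to global, is the other subtle input but comes for free once fibre compactness is assumed.
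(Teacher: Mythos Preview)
The paper does not give its own proof of this lemma: it is recorded in the Preliminaries as the classical Ehresmann theorem and simply cited from \cite[Theorem 3.17]{horing07}. Your outline is exactly the standard argument behind that reference---horizontal path lifting, Frobenius integrability giving homotopy invariance of parallel transport, the resulting monodromy representation $\rho:\pi_1(Y)\to\Aut(F)$, and the identification $X\cong(Y_{\mathrm{univ}}\times F)/\pi_1(Y)$---and it is correct.
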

 
    \vspace{0.1cm}
    
    \subsection{Positivity of vector bundles}\label{section-positivity}
    In this subsection, we compare BC-$p$ positivity with other curvature positivity conditions. We will always assume that $(E,h)$ is a Hermitian holomorphic vector bundle on a complex manifold $X$. Let $\{\frac{\p}{\p z^i}\}$ be the local holomorphic coordinates on $X$ and $\{e_\alpha\}$ be an unitary frame of $E$. Then the Chern curvature tensor $R^{(E,h)}$ has components
    $$R_{i\bar{j}k\bar{l}}=-\frac{\p^2 h_{k\bar{l}}}{\p z^i \p \bar{z}^j}+h^{p\bar{q}}\frac{\p h_{k\bar{q}}}{\p z^i}\frac{\p h_{p\bar{l}}}{\p \bar{z}^j}.$$
    \noindent Fix another Hermitian metric $g$ on $X$, the {\em mean curvature} of $(E,h)$ with respect to $g$ is defined by
    $$\tr_g R^{(E,h)}=g^{i\bar{j}}R_{i\bar{j}k\bar{l}}.$$ 
    \begin{definition}\label{defn-vectorbundle-Hermitianmetric}
    	BC-$p$-positivity has been introduced in Definition \ref{defn-BC-positivity}. We say that 
    	\begin{itemize}
    		\item $(E,h)$ is RC-positive if it is BC-$1$ positive.
    		\item $(E,h)$ is {\em mean curvature positive} if there exists some Hermitian metric $g$ on $X$ such that $\tr_g R^{(E,h)})$ is positive definite on $E$.
    		\item $(E,h)$ is uniformly RC-positive if for any $x\in X$, there exists some $v\in T_{X,x}$ such that
    		$R^{(E,h)}(v,\bar{v},\cdot,\cdot)$ is positive definite. This is introduced in \cite{yang2019}
    	\end{itemize}
    \end{definition}
    Clearly, mean curvature positivity of $(E,h)$ implies the mean curvature positivity of $(\Lambda^pE,\Lambda^p h)$, and thus RC-positivity of $(\Lambda^pE,\Lambda^ph)$ for every $p\geq1$. Recall that uniformly RC-positivity implies the mean curvature positivity (see e.g. \cite[Proposition 3.6]{LZZ21}).
    \begin{proposition}\label{prop-relationship} We have the following relationship for $(E,h)$ for every $p\geq1$:
    	\[
    	\begin{tikzcd}
    		\text{uniformly RC-positive} \arrow[r,Rightarrow] & \text{mean curvature positive} \arrow[ld,Rightarrow] &  \\
    		(\Lambda^pE,\Lambda^ph)\text{ is RC-positive} \arrow[r,Rightarrow] & (E,h)\text{ is BC-$p$-positive} 
    	\end{tikzcd}
    	\]
    \end{proposition}
    The last implication follows from the following basic statement:
    \begin{lemma}\label{lem-BC-RC}
    	$(E,h)$ is BC-$p$ positive for some $1\leq p\leq n$ if and only if for any $x\in X$ and any nonzero decomposable section $u=e_1\wedge\cdots\wedge e_p\in \Lambda^p E_x$, there exists some $v\in T_{X,x}$ such that
    	$$R^{(\Lambda^hE,\Lambda^ph)}(v,\bar{v},u,\overline{u})>0.$$
    \end{lemma}
    
    \begin{proof}
    	Suppose that $u=e_1\wedge\cdots\wedge e_p\in \Lambda^p E_x$ is a nonzero decomposable section at one point $x$. Set $\Sigma=\spaned\{e_1,\cdots,e_p\}$. Since $u$ is nonzero, $\dim \Sigma=p$. Choose an unitary frame $\{\widehat{e_1},\cdots,\widehat{e}_p\}$. Then $u=t \widehat{e_1}\wedge \cdots\wedge\widehat{e_p}$ for some $t\in\C^*$ and for any $v\in T_{X,x}$, it holds that
    	\begin{align*}
    		R^{(\Lambda^hE,\Lambda^ph)}(v,\bar{v},u,\overline{u})=&|t|^2R^{(\Lambda^hE,\Lambda^ph)}(v,\bar{v},\widehat{e_1}\wedge\cdots\wedge \widehat{e_p},\overline{\widehat{e_1}\wedge\cdots \widehat{e_p}})\\
    		=&|t|^2\sum\limits_{i=1}^p R^{(E,h)}(v,\bar{v},\widehat{e_i},\overline{\widehat{e_i}}),
    	\end{align*}
        which completes the proof building on the definition of RC-positivity and BC-$p$ positivity,
    \end{proof}
    
    At one point, from a linear algebra perspective, the reverse directions of the relationships above are all incorrect. However, for tangent bundles, their entire positivity will imply the rational connectedness when $X$ is K\"ahler. Actually, in \cite{yang2018,yang2019}, based on Campana-Demailly-Peternell's criterion for rational connectedness, X. Yang proved that:
    \begin{theorem}[X. Yang]
    	Let $X$ be a compact K\"ahler manifold of complex dimension $n$, if $\Lambda^pT_X$ is RC-positive for every $1\leq p\leq n$, then $X$ is rationally connected.
    \end{theorem}
    
    In this paper, we shall prove the rational connectedness under the assumption of BC-$p$ positivity for all $1\leq p\leq n$ in Section \ref{section-BC}. Conversely, a recent work of Li-Zhang-Zhang established that the rational connectedness implies the mean curvature positivity of the tangent bundle based on constructing $L^p$-approximate critical Hermitian structures in \cite{LZZ21}, thus give a differential geometric criterion for rational connectedness.
    
    \begin{theorem}[Li-Zhang-Zhang]\label{thm-RC-meancurvature}
    	A compact K\"ahler manifold is rationally connected if and only if the tangent bundle is mean curvature positive.
    \end{theorem}

	\section{BC-positivity and Rational connectedness}\label{section-BC}
	In this section, we shall establish Theorem \ref{main-thm-BC-p-1} and provide several applications. The key ingredient is deriving the strict subharmonicity of the hermitian ratio along some direction.
	
	\subsection{Bochner-type formula}\label{Hermitian ratio}
	  We first introduce the Hermitian ratio of a pseudo-effective subsheaf (see \cite{campana2015} for a similar consideration) and then establish its associated Bochner-type formula. 
	  
	  Suppose that $(X,g)$ is a compact Hermitian manifold of dimension $n$ and $\mF$ is a pseudo-effective invertible subsheaf of $\Omega^p_X:=\mO(\Lambda^{p}T_X^*)$. Then there exist a finite covering $\{U_\alpha\}$ consists of coordinate neighborhoods and local holomorphic $(p,0)$-forms $\eta_\alpha$ generating $\mathcal{F}|_{U_\alpha}$, $\mathcal{F}$ is associated with the \u{C}ech cocycle $b_{\alpha\beta}$ in $\mathcal{O}_X^*(U_\alpha)$ such that $\eta_\beta = b_{\alpha\beta} \eta_\alpha$ and there exists a singular Hermitian metric $h=e^{-\varphi}$ of $\mathcal{F}$ defined by a collection of plurisubharmonic functions $\varphi_\alpha \in PSH(U_\alpha)\cap L_{\loc}^1(U_\alpha)$ such that $e^{-\varphi_\alpha}=|\eta_\alpha|^2_h$.
	 
	 \begin{definition}\label{defn-psi} For any pseudo-effective subsheaf $\mF\subset \Omega_X^p$, its Hermitian ratio is locally defined by
	 	$$\psi:=e^{\varphi_\alpha}|\eta_\alpha|_{\Lambda^pg^*}^2.$$ 
	 One readily check that $\psi$ is indeed global.
	 \end{definition} 
	 
	 \begin{remark}\label{remark-viewpoint}
	 	There is a reversible identification of $\mathcal{F}$ with a global holomorphic section $\sigma = \eta_\alpha \otimes e_\alpha$ of $H^0(U, \Omega_X^p \otimes \mathcal{F}^{-1})$, where $e_\alpha$ is the dual basis of $\mathcal{F}^{-1}|_{U\alpha}$ satisfying $e_\alpha(\eta_\alpha)=1$; in particular $|e|_{h^{-1}}^2=e^{\varphi_\alpha}$. Under this identification, $\psi$ equals the squared norm $|\sigma|_H^2$ with respect to the singular metric $H=\Lambda^pg^*\otimes h^{-1}$.
	 \end{remark}
	 
	 The key ingredient of proving Theorem \ref{main-thm-BC-p-1}  is the following technical result.
	 
	 \begin{proposition}\label{prop-subharmonic}
	 	Let $f:X\dashrightarrow Y$ be a dominant meromorphic map between compact complex manifolds with $p:=\dim Y\leq \dim X$, and $\mF$ be the reflexive extension of $f^*K_Y$. If $T_X$ is BC-$p$-positive and $K_Y$ is pseudo-effective, then for any $x\in X$ such that $\psi(x)\neq0$, there exists $0\neq v\in T_{X,x}$ such that $\log\psi|_{L_v}$ is strictly subharmonic on a neighborhood of $x$, where $L_v:=v\cdot \C$ is a line defined by the local holomorphic coordinates centered at $x$.
	 \end{proposition}
	
	 The proof proceeds by computing $\im\partial\bar\partial\psi$ in the sense of currents. A key observation is that when $\mathcal{F}$ is the reflexive extension of $f^*K_Y$, the form $\eta_\alpha$ is decomposable at every point of $U_\alpha$; in this situation, BC positivity already suffices to yield the positivity of $\im\partial\bar\partial\log\psi$ in certain directions. This property may fail for a general pseudo-effective subsheaf $\mathcal{F}$. We now state the precise statement, which is also essential to the proof of Proposition \ref{prop-integral-inequality-1}.
	 
	 \begin{proposition}\label{prop-Bochner}
	 	Let $(X,g)$ be a compact Hermitian manifold and $\mF$ be a pseudo-effective invertible subsheaf of $\Omega_X^p$. The following hold.
	 	\begin{itemize}
	 		\item[(1)] When $\mF$ coincides with the reflexive extension of $f^*K_Y$ for a dominant meromorphic map $f:X\dashrightarrow Y$, then
	 		$\eta_\alpha$ is decomposable at every point of $U_\alpha$, i.e., for any $x\in U_\alpha$, there exists some unitary coframe $\{e_i\}_{i=1}^p$ of $T_{X,x}^*$ such that $\eta_\alpha(x)=t_{\alpha,x} e_1\wedge \cdots \wedge e_p$ for some $t_{\alpha,x}\in\C$. In particular, $|t_{\alpha,x_0}|^2=|\eta_\alpha(x_0)|_{\Lambda^pg^*}^2$ and
	 		\begin{equation}\label{equa-Curvature}
	 			-\langle R^{\Lambda^pg^*}(u,\bar{u})\eta_\alpha,\overline{\eta_\alpha}\rangle_{\Lambda^pg^*}=|\eta_\alpha|^2\sum\limits_{i=1}^p R_{u\bar{u}i\bar{i}}
	 		\end{equation}
	 		for all $u\in T_{X,x}$, where $R^{\Lambda^pg^*}$ (resp. $R$) denotes the Chern curvature tensor of $\Lambda^pg^*$ (resp. $g$) and  $R_{u\bar{u}i\bar{i}}=R(u,\bar{u},e_i^*,\overline{e_i^*})$.
	 		\item[(2)] For any $x\in U_\alpha$ and all $u\in T_{X,x}$, we have the following Bochner-type formulas
	 		\begin{equation}\label{equa-Bochner-log}
	 			\im\pp\log|\eta_\alpha|_{\Lambda^pg^*}\geq-\frac{\{\im R^{\Lambda^pg^*}\eta_\alpha,\overline{\eta_\alpha}\}_{\Lambda^pg^*}}{|\eta_\alpha|^2_{\Lambda^p g^*}}
	 		\end{equation}
	 		and
	 		\begin{equation}\label{equa-Bochner}
	 			\begin{split}
	 				\im\p\bp\psi=&e^{\varphi_\alpha}\im\{D'_H\eta_\alpha,\overline{D'_H\eta_\alpha}\}_{\Lambda^pg^*}
	 				-e^{\varphi_\alpha}\{\im R^{\Lambda^pg^*}\eta_\alpha,\overline{\eta_\alpha}\}_{\Lambda^pg^*}\\
	 				&+\left(\im\pp e^{\varphi_\alpha}-e^{\varphi_\alpha}\im\p\varphi_\alpha\wedge\bp\varphi_\alpha\right)|\eta_\alpha|_{\Lambda^pg^*}^2
	 			\end{split}
	 		\end{equation}
	 	    in the sense of currents, where $D_H'\eta_\alpha=D'\eta_\alpha+\p\varphi_\alpha\otimes\eta_\alpha$ and $D'$ denotes the $(1,0)$-part of the Chern connection of $g$.
	 	\end{itemize}
	 \end{proposition}
	 
	 From the viewpoint explained in Remark \ref{remark-viewpoint}, $D'_H$ can be viewed as the Chern connection of the singular Hermitian metric $H=\Lambda^pg^*\otimes h^{-1}$ and so \eqref{equa-Bochner} is just the standard Bochner-type formula of $\im\pp|\sigma|^2_H$ for $\sigma=\eta_\alpha\otimes e_\alpha\in H^0(X,\Omega_X\otimes\mF^{-1})$ when $h$ is smooth.
	 
	 \begin{proof}[\bf Proof of Proposition \ref{prop-Bochner} (1)]

	 	To avoid any misunderstandings, we will emphasize when the local coordinates have been re-selected, when necessary. Denote the indeterminacy locus of $f$ by $Z$. First, we consider the case $x_0\in U_\alpha\setminus Z$. Let $\{V_\beta\}$ be a finite cover of $Y$ consists of coordinates neighborhood $(\w_1\cdots,\w_p)$. Fix $x_0\in U_\alpha\setminus Z$, there exists some $\beta$ such that $x_0\in f^{-1}(V_\beta)$. Since $\mF$ coincides with $f^*K_Y$ on $X\setminus Z$, $f^*(d\w_1\cdots d\w_p)=f^*d\w_1\wedge\cdots\wedge f^*d\w_p$ generates $\mF|_{(U_\alpha\setminus Z)\cap f^{-1}(V_\beta)}$ and thus there exists some $t_{\alpha,\beta}\in\mO^*\left((U_\alpha\setminus Z)\cap f^{-1}(V_\beta)\right)$ such that $\eta_\alpha=t_{\alpha,\beta}f^*d\w_1\wedge\cdots\wedge f^*d\w_p.$ Re-choose local holomorphic coordinates $(z_1\cdots,z_n)$ centered at $x_0$ and $(\w_1,\cdots,\w_p)$ centered at $f(x_0)$ such that, at point $x_0$, we have
	 	$\w=\im\sum\limits_{i=1}^pdz^i\wedge d\bar{z}^i$ and $\frac{\p f^j}{\p z^i}=r_i\delta_i^j$
	 	for some real numbers $r_1,\cdots,r_p$, where $f^j=\w^j\circ f$. Then
	 	\begin{equation}\label{equa-form-localexpression}
	 		\eta_\alpha(x_0)=t_{\alpha,\beta}(x_0)r_1\cdots r_p dz^1\wedge\cdots \wedge dz^p
	 	\end{equation}
	 	over $U_\alpha\setminus Z$ and we conclude that $\eta_\alpha$ is decomposable at a generic point of $U_\alpha$. It remains to consider the case $x_0\in U_\alpha\cap Z$. Consider the map $\rho:\Omega_X^1\rightarrow \Omega_X^{p+1}$ on $U_\alpha$ defined by $v\mapsto v\wedge\eta_\alpha$. It can be easily seen that $\ker\rho$ has rank at most $p$ in $U_\alpha$; and has rank $p$ at one point $x$ if and only if $\eta_\alpha(x)$ is non-zero and is decomposable (see e.g. \cite[Lemma 3.32]{CHLMSSX25}). Suppose that $\eta_\alpha(x_0)$ is non-zero and nondecomposable. Then $\rank(\ker\rho(x_0))<p$ and so $\rank(\ker\rho(x))<p$, which contradicts to that we have shown. 
	 	
	 	Consequently, $\rho_x$ is decomposable at every point and we can write $\eta_\alpha(x)=t_{\alpha,x}e_1\wedge\cdots\wedge e_p$ for some unitary coframe $\{e_i\}$ of $T_{X,x}$ and $t_{\alpha,x}\in\R$. Then, at $x$, for any $u\in T_{X,x}$, it holds that
	 	\begin{align*}
	 		&-\langle R^{(\Lambda^pT_X^*,\Lambda^pg^*)}(u,\bar{u})\eta_\alpha, \overline{\eta_\alpha}\rangle_{\Lambda^pg^*}\\
	 		&=-\langle R^{(\Lambda^pT_X^*,\Lambda^pg^*)}(u,\bar{u})\left(t_{\alpha,x_0}e_1\wedge\cdots\wedge\cdots e_p\right),\overline{t_{\alpha,x_0}e_1\wedge\cdots\wedge\cdots e_p}\rangle_{\Lambda^pg^*}\\
	 		&=|\eta_\alpha|^2\sum\limits_{i=1}^p R(u,\bar{u},e_i^*,\overline{e_i^*}),
	 	\end{align*}
	 	where the last equality follows from $|\eta_\alpha(x_0)|^2=|t_{\alpha,x_0}|^2$. The proof is complete.
	 \end{proof}
	 
	 \begin{proof}[\bf Proof of Proposition \ref{prop-Bochner} (2)]
	 	 It suffices to work on $U_\alpha$. Note that $\log|\eta_\alpha|_{\Lambda^pg^*}^2\in L_\loc^1(U_\alpha)$, we have
	 	\begin{align*}
	 		&\im\pp\log|\eta_\alpha|_{\Lambda^pg^*}^2=\lim\limits_{\epsilon\rightarrow 0}\im\pp\log(|\eta_\alpha|_{\Lambda^pg^*}^2+\epsilon)\\
	 		&\geq\lim\limits_{\epsilon\rightarrow 0}\im\bigg(\frac{|\eta_\alpha|_{\Lambda^pg^*}^2\{D'\eta_\alpha,\overline{D'\eta_\alpha}\}_{\Lambda^pg^*}-\{D'\eta_\alpha,\overline{\eta_\alpha}\}_{\Lambda^pg^*}\wedge\overline{\{D'\eta_\alpha,\overline{\eta_\alpha}\}_{\Lambda^pg^*}}}{(|\eta_\alpha|_{\Lambda^pg^*}^2+\epsilon)^2}-\frac{\{ R^{\Lambda^pg^*}\eta_\alpha,\overline{\eta_\alpha}\}_{\Lambda^pg^*}}{|\eta_\alpha|^2_{\Lambda^p g^*}+\epsilon}\bigg)\\
	 		&\geq-\frac{\{\im R^{\Lambda^pg^*}\eta_\alpha,\overline{\eta_\alpha}\}_{\Lambda^pg^*}}{|\eta_\alpha|^2_{\Lambda^p g^*}}
	 	\end{align*}
	 	in the sense of currents, where the last inequality follows from the Cauchy-Schwarz inequality and the Lebesgue's dominated convergence theorem. 
	 	
	 	Now let us verify \eqref{equa-Bochner}. A direct computation on $\psi=e^{\varphi_\alpha}|\eta_\alpha|_{\Lambda^pg^*}^2$ gives that
	 	\begin{align*}
	 		\im\pp\psi
	 		=&\im e^{\varphi_\alpha}\bigg(\p {\varphi_\alpha}\wedge \{\eta_\alpha,\overline{D'\eta_\alpha}\}_{\Lambda^pg^*}+ \{D'\eta_\alpha,\overline{\eta_\alpha}\}_{\Lambda^pg^*}\wedge\bp\varphi_\alpha\bigg)
	 		+e^{\varphi_\alpha}\im\pp|\eta_\alpha|^2_{\Lambda^pg^*}\\
	 		&+|\eta_\alpha|^2_{\Lambda^pg^*}\im\pp e^{\varphi_\alpha}\\
	 		=&\im e^{\varphi_\alpha}\bigg(\p {\varphi_\alpha}\wedge \{\eta_\alpha,\overline{D'\eta_\alpha}\}_{\Lambda^pg^*}+ \{D'\eta_\alpha,\overline{\eta_\alpha}\}_{\Lambda^pg^*}\wedge\bp\varphi_\alpha+|\eta_\alpha|^2_{\Lambda^pg^*}\p\varphi_\alpha\wedge\bp\varphi_\alpha\\
	 		&\ \ \ \ \ \ \ \ \ \ \ \ \ +\{D'\eta_\alpha,\overline{D'\eta_\alpha}\}_{\Lambda^pg^*}\bigg)-e^{\varphi_\alpha}\{\im R^{\Lambda^pg^*}\eta_\alpha,\overline{\eta_\alpha}\}_{\Lambda^pg^*}\\
	 		&\ \ \ \ \ \ \  \ \ \ \ \ \ +|\eta_\alpha|^2_{\Lambda^pg^*}\big(\im\pp e^{\varphi_\alpha}-e^{\varphi_\alpha}\im\p\varphi_\alpha\wedge\bp\varphi_\alpha\big)\\
	 		=&e^{\varphi_\alpha}\im\{D'\eta_\alpha+\p\varphi_\alpha\otimes\eta_\alpha,\overline{D'\eta_\alpha+\p\varphi_\alpha\otimes\eta_\alpha}\}_{\Lambda^pg^*}-e^{\varphi_\alpha}\{\im R^{\Lambda^pg^*}\eta_\alpha,\overline{\eta_\alpha}\}_{\Lambda^pg^*}\\
	 		&+\left(\im\pp e^{\varphi_\alpha}-e^{\varphi_\alpha}\im\p\varphi_\alpha\wedge\bp\varphi_\alpha\right)|\eta_\alpha|_{\Lambda^pg^*}^2,
	 	\end{align*}
	 	where the first equality is valid because $|\eta_\alpha|^2_{\Lambda^pg^*}$ is smooth and $\p e^{\varphi_\alpha}=e^{\varphi_\alpha}\p \varphi_\alpha$ by Proposition \ref{prop-convergent} (1).
	 \end{proof}
	 
	 We conclude this subsection with proving Proposition \ref{prop-subharmonic}.
	 \begin{proof}[\bf Proof of Proposition \ref{prop-subharmonic}]
	 	Since $K_Y$ is pseudo-effective, $\mF$ is pseudo-effective by Lemma \ref{lem-pseudo-effective}.	We may assume that  $(T_X,g)$ if BC-$p$ positive for some Hermitian metric $g$ and $x\in U_\alpha$ for some $\alpha$. Note that $\psi(x)\neq0$ implies that $\eta_\alpha(x)\neq 0$ and $\varphi_\alpha(x)\neq -\infty$. Write $\eta_\alpha=t_{\alpha,x}e_1\wedge\cdots\wedge e_p$ as in the statement (1) of Proposition \ref{prop-Bochner}. BC-$p$-positivity of $T_X$ implies that there exists some $0\neq v\in T_{X,x}$ such that $\sum\limits_{i=1}^p R_{v\bar{v}i\bar{i}}>0$ and thus
	 	$$-\langle R^{(\Lambda^pT_X^*,\Lambda^pg^*)}(v,\bar{v})\eta_\alpha, \overline{\eta_\alpha}\rangle=|\eta_\alpha(x)|^2\sum\limits_{i=1}^p R_{v\bar{v}i\bar{i}}>0.$$
	 	It follows that $(\log|\eta_\alpha|_{\Lambda^pg^*}^2)|_{L_v}$ is strictly subharmonic on a neighborhood of $x$. Together with the decomposition $\log\psi = \varphi_\alpha + \log|\eta_\alpha|_{\Lambda^pg^*}^2$ and the fact that $\varphi_\alpha \in \mathrm{PSH}(U_\alpha)$, this establishes the desired result and finishes the proof.
	 \end{proof}
	
	\subsection{Proof of Theorem \ref{main-thm-BC-p-1} and applications}\label{BC-application}
	Now let us prove Theorem \ref{main-thm-BC-p-1} and provide several applications.
	\begin{proof}[\bf Proof of Theorem \ref{main-thm-BC-p-1}]
		We argue by contradiction. Suppose that $K_Y$ is pseudo-effective. Let $\mF$ be the reflexive extension of $f^*K_Y$ and so it's pseudo-effective by Lemma \ref{lem-pseudo-effective}. given in Definition \ref{defn-psi}. Since $X$ is compact and $\psi$ is upper semi-continuous, there exists some $x\in X$ such that $\psi(x)$ achieves the maximum of $\psi$. Then $\psi(x)\neq 0$, otherwise $\{\eta_\alpha=-\infty\}\cup \{\varphi_\alpha=-\infty\}=U_\alpha$, which is a contradiction. By Proposition \ref{prop-subharmonic}, there exists $0\neq v\in T_{X,x}$ such that $\log\psi|_{L_v}$ is strictly subharmonic. Thus, there exists $r>0$ such that $\psi|_{L_v}$ has no maximum on $L_v\cap B(0,r)$. This contradict to that $\psi(x)$ is maximum.
	\end{proof}
	
	\begin{remark}
		From Remark \ref{remark-viewpoint}, Theorem \ref{main-thm-BC-p-1} is actually a special case of the following vanishing theorem: if $T_X$ is BC-$p$ positive for some $p \ge 1$, then $H^0(X,\Omega_X^p \otimes \mathcal{F}^{-1}) = 0$ for any pseudo-effective invertible sheaf $\mathcal{F}$. A proof of this statement might be obtained via Whitney’s comass, as suggested in \cite[Section 3]{ni2021}.
	\end{remark}
	
	\begin{proof}[\bf Proof of Theorem \ref{main-thm-BC-p-2} and Theorem \ref{main-thm-BC-RC}]
		Assume that $X$ is a compact K\"ahler manifold and its tangent bundle is BC-$p$ positive for every $p\geq k$. If its MRC fibration $f:X\dashrightarrow Y$ has generic fibre of dimension $< n-k+1$, then $\dim Y\geq k$.  Based on Lemma \ref{lem-MRCfibration}, $K_Y$ is pseudo-effective, which contradicts Theorem \ref{main-thm-BC-p-1}. Thus the proof of Theorem \ref{main-thm-BC-p-2} is complete. Combining Theorem \ref{main-thm-BC-p-2}, Theorem \ref{thm-RC-meancurvature} and Proposition \ref{prop-relationship}, we have the following relationship
		\[
		\begin{tikzcd}
			T_X\text{ is BC-$p$ positive for every }p\geq1 \arrow[d,Rightarrow] &  \\
			X\text{ is rationally connected} \arrow[r,Rightarrow] & T_X \text{ is mean curvature positive} \arrow[lu,Rightarrow]
		\end{tikzcd}
		\]
		which implies Theorem \ref{main-thm-BC-RC}.
	\end{proof}
	
    Theorem \ref{main-thm-orthogonal-positive} is an immediate consequence of Theorem \ref{main-thm-BC-p-2} and the following statement proved by L. Ni.
     \begin{proposition}(see e.g. \cite[Theorem 6.3]{ni2021})
     	Let $(X,g)$ be a compact K\"ahler manifold. If $\rc^\perp>0$, then $X$ is BC-$p$ positive for every $p\geq 1$.
     \end{proposition}
     
	 \noindent Theorem \ref{main-thm-BC-p-1} could be applicable to the broader curvature conditions considered in \cite{ni2021}, which ensure BC-$p$ positivity, such as $\rc_k^\perp,\sca_k^\perp$, etc. 
	 
	 \vspace{0.1cm}
	 
	 Now let us focus on proving Theorem \ref{main-thm-HSC-conformallyKahler}. It suffices to show the following proposition.
	 \begin{proposition}\label{prop-conformallykahler-BC}
	 	If $(X,h)$ is a compact conformally K\"ahler manifold with positive holomorphic sectional curvature, then its tangent bundle is BC-$p$ positive for every $p\geq1$.
	 \end{proposition}
	 
	 \begin{proof}
	 	Suppose that $X$ admits a conformally K\"ahler metric $h$ with positive holomorphic sectional curvature. Since $h$ is conformally K\"ahler, there exists some function $f\in C^\infty(X)$ such that $g=e^{-2f}h$ is K\"ahler. We shall verify that $(T_X,e^{(p+1)f}g)$ is $BC$-$p$ positive. A direct computation yields that
	 	$$0\leq R^h(v,\bar{v},u,\bar{u})=e^{2f}\big(R^g(v,\bar{v},u,\bar{u})-2g(v,\bar{v})\cdot\pp f(u,\bar{u})\big)$$
	 	for all $v,u\in T_{X,x}$, where $R^h,R^g$ are the Chern curvature tensors of $h,g$ of respectively.
	 	
	 	For any point $x\in X$, let $\{e_1,\cdots,e_p\}$ be a unitary frame of a $k$-dimensional subspace $\Sigma\subset T_{X,x}$ with respect to $e^{(p+1)f}g$, then $\{s_1:=e^{\frac{p+1}{2}f}e_1,\cdots, s_p=e^{\frac{p+1}{2}f}e_p\}$ is a unitary frame of $\Sigma$ with respect to $g$. Since $g$ is K\"ahler, it follows from Berger's average trick \cite{be55} (see e.g. \cite[page 8]{Li2022}) that
	 	\begin{align*}
	 		0&\leq e^{2f}\fint_{v\in \mathbb{S}^{2p-1}\subset \Sigma}\big(R^g(v,\bar{v},v,\bar{v})-2g(v,\bar{v})\cdot\pp f(v,\bar{v})\big)d\mu(v)\\
	 		&=e^{2f}\left(\frac{2}{p(p+1)}\sum\limits_{i,j=1}^pR^g(s_i,\overline{s_i},s_j,\overline{s_j})-\frac{2}{p}\sum\limits_{i=1}^p\pp f(s_i,\bar{s}_i)\right).
	 	\end{align*}
	 	Thus
	 	\begin{align*}
	 		&\sum\limits_{i,j=1}^pR^{e^{(p+1)f}g}(e_i,\overline{e_i},e_j,\overline{e_j})\\
	 		&=e^{-2(p+1)f}\sum\limits_{i,j=1}^p R^{e^{(p+1)f}g}(s_i,\overline{s_i},s_j,\overline{s_j})\\
	 		&=e^{-(p+1)f}\big(\sum\limits_{i,j=1}^pR^g(s_i,\overline{s_i},s_j,\overline{s_j})-(p+1)\sum\limits_{i=1}^p\pp f(s_i,\overline{s_i})\big)\geq0,
	 	\end{align*}
	 	which implies that there exists some $1\leq k\leq p$ such that $\sum\limits_{j=1}^pR^{e^{(p+1)f}g}(e_k,\overline{e_k},e_i,\overline{e_i})>0$ and the proof completes.
	 \end{proof}
	
	\section{Structure theorems for immediate curvature conditions}\label{section-structure}
	This section is devoted to establishing the structure theorems for $\sca_{a,b,k}\geq0$ (Theorem \ref{main-thm-partially}), which will be applied to conclude Theorem \ref{main-thm-kricci} and the quasi-positive case of Corollary \ref{main-thm-orthogonal-positive}. We introduce {\em the mixed partially curvature} $\sca_{a,b,k}$ for some $a,b\in\R$ and $k\in\N^+$ on a compact K\"ahler manifold $(X,g)$ as follows: for any $k$-dimensional subspace $\Sigma\subset T_xX$,
	$$\sca_{a,b,k}(x,\Sigma):=\fint_{v\in\Sigma,|v|_g=1}\big(a\rc(v,\bar{v})+b\hsc(v)\big)d\mu(v),$$
	where $d\mu$ is the volume measure of the standard unit sphere and $\hsc$ denotes the holomorphic sectional curvature of $g$, i.e.,
	$$\hsc(v):=\frac{R(v,\bar{v},v,\bar{v})}{|v|^4_g},\ \ \forall v\in T_{X,x}.$$
	We say that $\sca_{a,b,k}\geq0$ if $\sca_{a,b,k}(x,\Sigma)\geq0$ for any $x\in X$ and any $k$-dimensional subspace $\Sigma\subset T_xX$. We have the following basic statement. 
	\begin{lemma}\label{lem-k-k+1}
		For a compact K\"ahler manifold $(X,g)$, the following hold.
		
		(1) Let ${e_1,\cdots,e_k}$ be a unitary frame of $\Sigma$, we have
		$$\sca_{a,b,k}(x,\Sigma)=\frac{a}{k}\sum\limits_{i=1}^kR_{i\bar{i}}+\frac{2b}{k(k+1)}\sum\limits_{i,j=1}^kR_{i\bar{i}j\bar{j}}$$
		where $R_{i\bar{i}}=\rc({e_i,\overline{e_i}})$ and $R_{i\bar{i}j\bar{j}}=R(e_i,\overline{e_i},e_j,\overline{e_j})$.
		
		(2) For any integers $p\geq k$, $\sca_{a,b,k}\geq0\Rightarrow \sca_{a,b,p}\geq0$. Similar consequence holds for (quasi)-positivity.
	\end{lemma}

    For reader's convenience, we include a complete proof because it seems that there is no a proof even for $S_k\geq0$ in existing literature.

	\begin{proof}
		(1) directly follows from
		$$R_k(x,\Sigma):=\fint_{v\in\Sigma,|v|_g=1}\rc(v,\bar{v})d\theta(v)=\frac{1}{k}\sum\limits_{i=1}^kR_{i\bar{i}}$$\ and $$H_k(x,\Sigma):=\fint_{v\in\Sigma,|v|_g=1}\hsc(v)d\theta(v)=\frac{2}{k(k+1)}\sum\limits_{i,j=1}^k R_{i\bar{i}j\bar{j}}$$ based on Berger's average trick (see e.g. \cite[page 8]{Li2022}). Now let us prove (2). It suffices to show that $\sca_{a,b,k-1}\geq0\Rightarrow \sca_{a,b,k}\geq 0$ for all $k$. For any $v\in \Sigma$ with $|v|_g^2=1$, consider the $k-1$ dimensional subspace $\Sigma_v:=\{u\in\Sigma:u\perp v=0\}$. Then we have
		$$R_{k-1}(x,\Sigma_v)=\frac{k}{k-1}R_k(x,\Sigma)-\frac{1}{k-1}\rc(v,\bar{v})$$
		and
		$$H_{k-1}(x,\Sigma_v)=\frac{2}{k(k-1)}\left(\frac{k(k+1)}{2}H_k(x,\Sigma)-2\rc_k(x,\Sigma)(v,\bar{v})+\hsc(v)\right).$$
		Here $\rc_k(x,\Sigma)$ denotes the $k$-Ricci curvature along $\Sigma$ introduced in \cite{nicpam}, and $\rc_k(x,\Sigma)(v,\bar{v})=\sum\limits_{i=1}^k R_{i\bar{i}v\bar{v}}$. By applying Berger's average trick again, we have
		$$\fint_{v\in\Sigma,|v|_g=1}\rc_k(x,\Sigma)(v,\bar{v})d\theta(v)=\frac{1}{k}\sum\limits_{i,j=1}^kR_{i\bar{i}j\bar{j}}=\frac{k+1}{2}H_k(x,\Sigma).$$
		The proof is completed by
		\begin{align*}
		   &\fint_{v\in\Sigma,|v|_g=1}\sca_{a,b,k-1}(x,\Sigma_v)d\theta(v)\\
		   &= a\fint_{v\in\Sigma,|v|_g=1} R_{k-1}(x,\Sigma_v)d\theta(v)+b\fint_{v\in\Sigma,|v|_g=1}H_{k-1}(x,\Sigma_v)d\theta(v)\\
		   &=a(\frac{k}{k-1}-\frac{1}{k-1})R_k(x,\Sigma)+b\frac{2}{k(k-1)}(\frac{k(k+1)}{2}-(k+1)+1)H_k(x,\Sigma)\\
		   &=aR_k(x,\Sigma)+bH_k(x,\Sigma)
		   =\sca_{a,b,k}(x,\Sigma).
		\end{align*}
	\end{proof}
	
	We shall establish the following structure theorem by investigating the MRC fibration.
	
	\begin{theorem}\label{main-thm-partially}
		Let $(X,g)$ be a compact K\"ahler manifold with $\sca_{a,b,k}\geq0$ for some $a\geq0$, $a+\frac{2}{k+1}b>0$ and $k\in\N^+$. Then one of the following situations occurs:
		\begin{itemize}
			\item $\rd(X)\geq n-k+1$. 
			\item There exists a locally constant fibration $f:X\rightarrow Y$ to a compact K\"ahler manifold $Y$ such that
			\begin{itemize}
				\item[(1)] $Y$ supports a Ricci-flat K\"ahler metric $g_Y$.
				\item[(2)] The fibre $F$ is rationally connected and supports a K\"ahler metric $g_F$ such that we have the following holomorphically and isometrically splitting for the universal cover 
				$$(X_{\mathrm{univ}},\mu^*g)\simeq (Y_{\mathrm{univ}},\pi^*g_Y)\times(F,g_F).$$
			\end{itemize}
		\end{itemize}
	\end{theorem}
	
	For clarifying our arguments, the proof of Theorem \ref{main-thm-partially} will be divided into three parts.
	
	\subsection{Integral inequality}\label{section-bochner-integral}
	  Suppose that $\mF$ is a pseudo-effective invertible subsheaf of $\Omega_X^p$ on a compact K\"ahler manifold $(X,g)$ of dimension $n$ for some $1\leq p\leq n$. We adopt the notations in Section \ref{Hermitian ratio} and denote the K\"ahler form of $g$ by $\w$. Consider the Hermitian ratio $\psi$ associated to $\mF$ given by Definition \ref{defn-psi}. Since $\p\psi =e^{\varphi_\alpha}(\p|\eta_\alpha|^2_{\Lambda^pg^*}+|\eta_\alpha|_{\Lambda^pg^*}^2\p\varphi_\alpha)$, part (3) of Proposition \ref{prop-convergent} implies that $\partial\psi$ is priori $L^2$-integrable. The core of the proof of Theorem \ref{main-thm-partially} is establishing the following precise integral inequality.
	  \begin{proposition}\label{prop-integral-inequality-1}
	  	Suppose that $a\geq0$ and $\frac{a}{p}+\frac{2}{p(p+1)}b\geq 0$. When $\mF$ coincides with the reflexive extension of $f^*K_Y$ for a dominant meromorphic map $f:X\dashrightarrow Y$ from $X$ to a complex manifold $Y$, we have
	  	\begin{equation}\label{equa-integral-inequality-1}
	  		\begin{split}
	  			-(\frac{a}{p}+\frac{2b}{p(p+1)}) \int_X|\p\psi|^2\cdot\w^n\geq \int_X\psi^2\sca_{a,b,p}(x,\Sigma)\cdot\w^n,
	  		\end{split}
	  	\end{equation}
	  	where $\Sigma=\spaned\{e_1^*,\cdots,e_p^*\}$ and $e_1,\cdots,e_p\in T_{X,x}^*$ are given as in Proposition \ref{prop-Bochner} (1).
	  \end{proposition}     
	  
	  Before proceeding to the proof, we show some preliminary facts. First we note the following basic observation, which also shows that $\psi^2 \sca_{a,b,p}(x,\Sigma)$ is bounded and measurable; consequently, the integral on the right‑hand side of \eqref{equa-integral-inequality-1} is well defined and finite. Apart from this lemma, the rest of the proof of Proposition \ref{prop-integral-inequality-1} holds for a general pseudo-effective subsheaf $\mF$ of $\Omega_X^p$.
	  
	  \begin{lemma}\label{lem-mixedcurvature-geometriccondition} When $\mF$ satisfies the assumption in Proposition \ref{prop-integral-inequality-1}, we have that
	  	 \begin{equation}
	  	 	\begin{split}
	  	 	   \psi^2 \sca_{a,b,p}(x,\Sigma)\cdot\frac{\w^n}{n!}
	  	 	   &=-e^{\varphi_\alpha}\{\im R^{\Lambda^pg^*}\eta_\alpha,\overline{\eta_\alpha}\}_{\Lambda^pg^*}\wedge\bigg((\frac{a}{p}+\frac{2b}{p(p+1)})\psi\frac{\w^{n-1}}{(n-1)!}\\
	  	 	   &\ \ \ \ \ \ \ \ \ \ \ \   \ \ \  \  \  \ \ \  -\frac{2b}{p(p+1)}(\im)^{p^2}e^{\varphi_\alpha}\eta_\alpha\wedge\overline{\eta_\alpha}\wedge\frac{\w^{n-p-1}}{(n-p-1)!}\bigg)
	  	 	\end{split}
	  	 \end{equation}
	  \end{lemma}
	  
	  \begin{proof}
	  	Building on Proposition \ref{prop-Bochner} (1), for any $x\in X$, we can write $\eta_\alpha$ as $t_{\alpha,x}e_1\wedge\cdots\wedge e_p$ for some unitary frame $\{e_1,\cdots,e_n\}$ of $T_{X,x}^*$ and $t_{\alpha,x}\in\C$. For any $\beta=\im\sum\limits_{i,j=1}^n\beta_{i\bar{j}}e_i\wedge \overline{e_j}\in \Lambda^{1,1}T_{X,x}^*$ at $x$, a direct computations yields
	  	\begin{equation}\label{equa-wedge}
	  		\begin{split}
	  			&\beta\wedge(\im)^{p^2}e^{\varphi_\alpha}\eta_\alpha\wedge\overline{\eta_\alpha}\wedge\frac{\w^{n-p-1}}{(n-p-1)!}\\
	  			&=\psi \left(\im\sum\limits_{i,j=1}^n\beta_{i\bar{j}}e_i\wedge \overline{e_j}\right)\wedge \left(\bigwedge_{j=1}^p \im e_j\wedge\overline{e_j}\right) \wedge \left(\sum\limits_{i_1<\cdots<i_{n-p-1}}\bigwedge_{k=1}^{n-p-1} \im e_{i_k}\wedge \overline{e_{i_k}}\right)\\
	  			&=\psi\sum\limits_{i=p+1}^n\beta_{i\bar{i}}\cdot \im e_1\wedge\overline{e_1}\wedge\cdots\wedge \im e_n\wedge \overline{e_n}\\
	  			&=\psi \sum\limits_{i=p+1}^n\beta_{i\bar{i}}\cdot\frac{\w^n}{n!}
	  		\end{split}
	  	\end{equation}
	  	since $$e^{\varphi_\alpha}\eta_\alpha\wedge \overline{\eta_\alpha}=e^{\varphi_\alpha}|t_{\alpha,x_0}|^2(\im)^{p^2} e_1\wedge\cdots\wedge {e_p}\wedge\overline{e_1\wedge\cdots\wedge {e_p}}=\psi \bigwedge_{j=1}^p \im e_j\wedge\overline{e_j}$$ by $|t_{\alpha,x}|^2=|\eta_\alpha|_{\Lambda^pg^*}^2$. Then
	  	\begin{equation}\label{equa-wedge-mixed}
	  		\begin{split}
	  			&(\frac{a}{p}+\frac{2b}{p(p+1)})\psi\beta\wedge\frac{\w^{n-1}}{(n-1)!} -\frac{2b}{p(p+1)}\beta\wedge(\im)^{p^2}e^{\varphi_\alpha}\eta_\alpha\wedge\overline{\eta_\alpha}\wedge\frac{\w^{n-p-1}}{(n-p-1)!}\\
	  			&=\psi\bigg((\frac{a}{p}+\frac{2b}{p(p+1)})\sum\limits_{i=1}^n\beta_{i\bar{i}}-\frac{2b}{p(p+1)}\sum\limits_{i=p+1}^n\beta_{i\bar{i}}\bigg).
	  		\end{split}
	  	\end{equation}
	    Recall \eqref{equa-Curvature} in Proposition \ref{prop-Bochner} that
	  	\begin{align}\label{equa-lemma-curvature}
	  		-e^{\varphi_\alpha}\langle \im R^{\Lambda^pg^*}(\cdot,\cdot)\eta_\alpha,\overline{\eta_\alpha}\rangle_{\Lambda^pg^*}=\psi \im \sum\limits_{i=1}^p R(\cdot,\cdot,e_i^*,\overline{e_i^*})
	  	\end{align}
	  	and Lemma \ref{lem-k-k+1} gives
	  	\begin{equation}\label{equa-lemma-mixedcurvature}
	  		\begin{split}
	  			\sca_{a,b,p}(x,\Sigma)\cdot\w^n
	  			=&\frac{a}{p}\sum\limits_{i=1}^p\rc(e_i^*,\overline{e_i^*})+\frac{2b}{p(p+1)}\sum\limits_{i,j=1}^pR(e_i^*,\overline{e_i^*},e_j^*,\overline{e_j^*})\\
	  			=&(\frac{a}{p}+\frac{2b}{p(p+1)})\sum\limits_{j=1}^p \rc(e_j^*,\overline{e_j^*})-\frac{2b}{p(p+1)}\sum\limits_{i=p+1}^n\sum\limits_{j=1}^pR(e_i^*,\overline{e_i^*},e_j^*,\overline{e_j^*})
	  		\end{split}
	  	\end{equation}
	  	The proof is completed by substituting $\beta = \im\sum_{j=1}^p R(\cdot,\cdot,e_j^*,\overline{e_j^*})$ into \eqref{equa-wedge-mixed} and invoking \eqref{equa-lemma-curvature} together with \eqref{equa-lemma-mixedcurvature}.
	  \end{proof}
	  
	  Since the locally defined form $e^{\varphi_\alpha}\eta_\alpha\wedge\overline{\eta_\alpha}$ is actually global and lies in $L^\infty(X,\Lambda^{p,p}T_X^*)$, the associated current $\partial\bar\partial(e^{\varphi_\alpha}\eta_\alpha\wedge\overline{\eta_\alpha})$ is globally well-defined. Its vanishing is crucial to the proof of Proposition \ref{prop-integral-inequality-1}.
	  \begin{lemma}\label{lem-current-vanishing}
	  	In the sense of currents,
	  	\begin{equation}
	  		\begin{split}\label{equa-vanish-current}
	  			&(\im)^{p^2+1}\pp(e^{\varphi_\alpha}\eta_\alpha\wedge\overline{\eta_\alpha})=0,\\
	  			(\im\pp e^{\varphi_\alpha}&-e^{\varphi_\alpha}\im\p\varphi_\alpha\wedge\bp\varphi_\alpha)\wedge(\im)^{p^2}\eta_\alpha\wedge\overline{\eta_\alpha}=0.
	  		\end{split}
	  	\end{equation}
	  \end{lemma}
	  
	  \begin{proof}[Proof of Lemma \ref{lem-current-vanishing}]
	  	A direct computation yields that
	  	\begin{align*}
	  		&(\im)^{p^2+1}\pp(e^{\varphi_\alpha}\eta_\alpha\wedge\overline{\eta_\alpha})\\
	  		&=(\im)^{(p+1)^2}e^{\varphi_\alpha}\bigg( \p \varphi_\alpha\wedge\eta_\alpha\wedge \overline{\p\eta_\alpha}+\p\eta_\alpha\wedge \overline{\p\eta_\alpha}+\p\eta_\alpha\wedge\overline{\p {\varphi_\alpha}\wedge\eta_\alpha}\bigg)\\
	  		&\ \ \ \ \ +\im\pp e^{\varphi_\alpha}\wedge(\im)^{p^2}\eta_\alpha\wedge\overline{\eta_\alpha}\\
	  		&=(\im)^{(p+1)^2}e^{\varphi_\alpha}\tau_\alpha\wedge \overline{\tau_\alpha}+(\im\pp e^{\varphi_\alpha}
	  		-e^{\varphi_\alpha}\im \p\varphi_\alpha\wedge\bp\varphi_\alpha)\wedge(\im)^{p^2}\eta_\alpha\wedge\overline{\eta_\alpha}\\
	  		&\geq0
	  	\end{align*}
	  	in the sense of currents, where $\tau_\alpha=\p\eta_\alpha+\p\varphi_\alpha\wedge\eta_\alpha$. The first equality is valid since $\eta_\alpha$ is smooth and the fact that $\p e^{\varphi_\alpha}=e^{\varphi_\alpha}\p\varphi_\alpha$ in $L^1(X,\Lambda^{1,0}T_X^*)$ by \ref{prop-convergent} (1). The positivity of $(\im)^{p^2+1}\pp(e^{\varphi_\alpha}\eta_\alpha\wedge\overline{\eta_\alpha})$ follows because both terms in the fourth line are positive currents. The first term $(\im)^{(p+1)^2}e^{\varphi_\alpha}\tau_\alpha\wedge\overline{\tau_\alpha}$, is positive because it belongs to $L^1(X,\Lambda^{p+1,0}T_X^*)$ by Proposition \ref{prop-convergent} (3) and is pointwise positive. The positivity of the second term is a direct consequence of Proposition \ref{prop-convergent} (4). Note that the positive measure $(\im)^{p^2+1}\pp(e^{\varphi_\alpha}\eta_\alpha\wedge\overline{\eta_\alpha})\wedge\w^{n-p-1}$ must be zero because
	  	$$\left\langle(\im)^{p^2+1}\pp(e^{\varphi_\alpha}\eta_\alpha\wedge\overline{\eta_\alpha}),\w^{n-p-1}\right\rangle=\left\langle (\im)^{p^2+1}e^{\varphi_\alpha}\eta_\alpha\wedge\overline{\eta_\alpha},\pp(\w^{n-p-1})\right\rangle=0,$$
	  	so $(\im)^{p^2+1}\pp(e^{\varphi_\alpha}\eta_\alpha\wedge\overline{\eta_\alpha})$ is also identically zero, which completes the proof because the each term in the fourth line of the above inequality is positive and then must be zero.
	  \end{proof}
	  
	  \begin{remark}
	  	As a by product of the proof of Lemma \ref{lem-current-vanishing},
	  	$\p\eta_\alpha+\p\varphi_{\alpha}\wedge\eta_\alpha=0
	  	$
	  	almost everywhere. This recovers \cite[Main Theorem]{Demailly02} and \cite[Theorem 2]{Wu21} from the viewpoint explained in Remark \ref{remark-viewpoint}, i.e., for any pseudo-effective invertible sheaf $\mF$, the holomorphic sections of $H^0(X,\Omega_X^p\otimes \mF^{-1})$ are parallel with respect to the singular covariant derivative $\p_h=\p+\p\varphi_\alpha\wedge\cdot $.
	  \end{remark}
	  Now let us complete the proof of Proposition \ref{prop-integral-inequality-1}.
      
      \begin{proof}[Proof of Proposition \ref{prop-integral-inequality-1}]
         Since $\psi$ are bounded, $\psi\im\pp\psi$ is a well-defined current. We perform an integration by parts as follows. Consider the approximation of $\varphi_\alpha$ by plurisubharmonic functions $\varphi_{\alpha,\epsilon}$ as in Section \ref{currents-definition} and a partition of unity $\{\theta_\alpha\}$ subordinate to $\{U_\alpha\}$. Setting $\psi_\epsilon:=e^{\varphi_{\alpha,\epsilon}}|\eta|_{\Lambda^pg^*}^2$, we have
         \begin{equation}\label{equa-integrationbyparts}
         	\begin{split}
         		-&\int_X|\p\psi|_\w^2\cdot\frac{\w^n}{n!}
         		=-\int_X \im\p\psi\wedge\bp\psi\wedge\frac{\w^{n-1}}{(n-1)!}\\
         		\geq &-\lim\limits_{\epsilon\rightarrow 0}\sum\limits_\alpha\int_X\im\p\psi_\epsilon\wedge\bp\psi_\epsilon\wedge (\theta_\alpha\frac{\w^{n-1}}{(n-1)!}) \\	
         		=&\sum\limits_\alpha\lim\limits_{\epsilon\rightarrow 0}\left(\left\langle \psi_\epsilon\im\pp\psi_\epsilon,\theta_\alpha\frac{\w^{n-1}}{(n-1)!} \right\rangle+\left\langle \psi_\epsilon\im\p\psi_\epsilon,-\p\theta_\alpha\wedge\frac{\w^{n-1}}{(n-1)!} \right\rangle\right)\\
         		=&\left\langle \psi\im\pp\psi,\sum\limits_\alpha\theta_\alpha\frac{\w^{n-1}}{(n-1)!} \right\rangle+n\left\langle \psi\im\p\psi,-\p(\sum\limits_\alpha\theta_\alpha)\wedge\frac{\w^{n-1}}{(n-1)!} \right\rangle\\
         		= &\left\langle  \psi\im\pp\psi,\frac{\w^{n-1}}{(n-1)!}\right\rangle
         	\end{split}
         \end{equation}
         where the inequality in the second line follows from parts (1) and (3) of Proposition \ref{prop-convergent}, while the equality in the fourth line follows from parts (1) and (2) of the same proposition. For simplicity, we set
         \begin{align}\label{equa-simpli-defn-current}
         	\gamma= e^{\varphi_\alpha}\im\{D'_H\eta_\alpha,\overline{D'_H\eta_\alpha}\}_{\Lambda^pg^*},\ T=\left(\im\pp e^{\varphi_\alpha}-e^{\varphi_\alpha}\im\p\varphi_\alpha\wedge\bp\varphi_\alpha\right)|\eta_\alpha|_{\Lambda^pg^*}^2.
         \end{align}
         Building on Proposition \ref{prop-Bochner} (2), we have
         \begin{equation}\label{equa-Boch-simply}
         	 \im\pp\psi=\gamma+T-e^{\varphi_\alpha}\{\im R^{\Lambda^pg^*}\eta_\alpha,\overline{\eta_\alpha}\}_{\Lambda^pg^*}.
         \end{equation}
         It follows from (2) and (4) of Proposition \ref{prop-convergent} that $\gamma$ is a positive $(1,1)$-form that belongs to $L^1(X,\Lambda^{1,1}T_X^*)$ and $T$ is a positive current. Since $\psi=e^{\varphi_\alpha}|\eta_\alpha|^2_{\Lambda^pg^*}$ is non-negative, bounded and Borel measurable, $\psi T$ is a well-defined positive current via integration and in particular $\langle \psi T,\w^{n-1}\rangle\geq0$.
         
         Combining \eqref{equa-integrationbyparts} and \eqref{equa-Boch-simply} yields that
      	\begin{equation}\label{equa-integral-inequality-2}
      		\begin{split}
      			-\int_X|\p\psi|_\w^2\cdot\frac{\w^n}{n!}
      			\geq &\int_X\left(\psi \gamma-\psi e^{\varphi_\alpha}\{\im R^{\Lambda^pg^*}\eta_\alpha,\overline{\eta_\alpha}\}_{\Lambda^pg^*}\right)\wedge\frac{\w^{n-1}}{(n-1)!}.
      		\end{split}
      	\end{equation}
     Applying the same argument as in \eqref{equa-integrationbyparts} yields
     \begin{equation}\label{equa-integrationbyparts-2}
     	\begin{split}
     		\left\langle\im\pp\psi,(\im)^{p^2}e^{\varphi_\alpha}\eta_\alpha\wedge\overline{\eta_\alpha}\wedge\w^{n-p-1}\right\rangle
     		=&\left\langle \psi(\im)^{p^2+1}\pp(e^{\varphi_\alpha}\eta_\alpha\wedge\overline{\eta_\alpha}),\w^{n-p-1}\right\rangle\\
     		=&0
     	\end{split}
     \end{equation}
     where the second equality follows from Lemma \ref{lem-current-vanishing}. The same lemma also implies
     \begin{equation}\label{equa-vanishing-2}
     	\left\langle T,  (\im)^{p^2}e^{\varphi_\alpha}\eta_\alpha\wedge\overline{\eta_\alpha}\wedge\w^{n-p-1}\right\rangle= \left\langle T\wedge  (\im)^{p^2}e^{\varphi_\alpha}\eta_\alpha\wedge\overline{\eta_\alpha},\w^{n-p-1}\right\rangle=0
     \end{equation}
     Combining \eqref{equa-Boch-simply}, \eqref{equa-integrationbyparts-2} and \eqref{equa-vanishing-2}, we obtain
      \begin{equation}\label{equa-integral-equality-3}
      	\begin{split}
      	   0
      	   =&\int_X\left(\gamma-e^{\varphi_\alpha}\{\im R^{\Lambda^pg^*}\eta_\alpha,\overline{\eta_\alpha}\}_{\Lambda^pg^*}\right)\wedge (\im)^{p^2}e^{\varphi_\alpha}\eta_\alpha\wedge\overline{\eta_\alpha}\wedge\frac{\w^{n-p-1}}{(n-p-1)!}.
      	\end{split}
      \end{equation}
	  Taking the difference between
	 	 $(\frac{a}{p}+\frac{2b}{p(p+1)})\times$\eqref{equa-integral-inequality-2} and $\frac{2b}{p(p+1)}\times$\eqref{equa-integral-equality-3}, then applying Lemma \ref{lem-mixedcurvature-geometriccondition}, we obtain:
	 	 \begin{equation}\label{equa-integral-inequality-4}
	 	 	\begin{split}
	 	 			&-(\frac{a}{p}+\frac{2b}{p(p+1)})\int_X|\p\psi|_g^2\cdot\frac{\w^n}{n!}\\
	 	 		&\geq \int_X\bigg(\big(\frac{a}{p}+\frac{2b}{p(p+1)}\big)\psi\gamma\wedge\frac{\w^{n-1}}{(n-1)!} -\frac{2b}{p(p+1)}\gamma\wedge(\im)^{p^2}e^{\varphi_\alpha}\eta_\alpha\wedge\overline{\eta_\alpha}\bigg)\wedge\frac{\w^{n-p-1}}{(n-p-1)!}\\
	 	 		&\ \ +\int_X\psi^2\sca_{a,b,p}(x,\Sigma)\cdot\frac{\w^n}{n!}.
	 	 	\end{split}
	 	 \end{equation}
	 	 To obtain \eqref{equa-integral-inequality-1}, it remains to prove the non-negativity of the integral in the second line of \eqref{equa-integral-inequality-4}. Recall that $\gamma$ is a positive $(1,1)$-form belonging to $L^1(X,\Lambda^{1,1}T_X^*)$. Since $\psi$ is bounded, the form $\psi\gamma$ also belongs to $L^1(X,\Lambda^{1,1}T_X^*)$. It therefore suffices to verify that the integrand is non-negative for almost every $x\in X$. We adopt the notation from the proof of Lemma \ref{lem-mixedcurvature-geometriccondition} and write $\gamma=\sum\limits_{i,j=1}^n\gamma_{i\bar{j}}e_i\wedge \overline{e_j}$ almost everywhere. Substituting $\beta=\gamma$ into \eqref{equa-wedge-mixed} gives
	    \begin{align*}
	    	&\left(\big(\frac{a}{p}+\frac{2b}{p(p+1)}\big)\psi\gamma\wedge\frac{\w^{n-1}}{(n-1)!} -\frac{2b}{p(p+1)}\gamma\wedge(\im)^{p^2}e^{\varphi_\alpha}\eta_\alpha\wedge\overline{\eta_\alpha}\right)\wedge\frac{\w^{n-p-1}}{(n-p-1)!}\\
	    	&=\psi\bigg(\big(\frac{a}{p}+\frac{2b}{p(p+1)}\big)\sum\limits_{i=1}^n\gamma_{i\bar{i}}-\frac{2b}{p(p+1)}\sum\limits_{i=p+1}^n\gamma_{i\bar{i}}\bigg)\frac{\w^n}{n!}\\
	    	&=\psi\left(\big(\frac{a}{p}+\frac{2b}{p(p+1)}\big)\sum\limits_{i=1}^p\gamma_{i\bar{i}}+\frac{a}{p}\sum\limits_{i=p+1}^n\gamma_{i\bar{i}}\right)\frac{\w^n}{n!}\\
	    	&\geq0,
	    \end{align*}
	    where the last inequality is justified by the nonnegativity of $\gamma_{i\bar{i}}$, which follows from its definition in \eqref{equa-simpli-defn-current}. This substitution is justified because all computations in the proof of Lemma \ref{lem-mixedcurvature-geometriccondition} are performed pointwise. The proof is complete.
	 \end{proof}

	As a direct consequence of Proposition \ref{prop-integral-inequality-1}, Theorem \ref{main-thm-orthogonal-positive} can be extended to the quasi-positive case:
	\begin{corollary}\label{coro-orthogonal-quasi-positive}
		 A compact K\"ahler manifold with quasi-positive $\rc^\perp$ is rationally connected.
	\end{corollary}
	
	\begin{proof}
		Suppose otherwise. Then there exists an MRC fibration $f:X\dashrightarrow Y$ to a compact K\"ahler manifold $Y$ with $\dim Y = p > 0$ and $K_Y$ pseudo-effective. Since $\rc^\perp$ is quasi-positive, Lemma \ref{lem-k-k+1} implies that the function $\sca_{1,-1,p}(x):=\inf_\Sigma\sca_{1,-1,p}(x,\Sigma)$ is quasi-positive. $\sca_{1,-1,p}$ is continuous because $\sca_{1,-1,p}(x,\Sigma)$ can be viewed as a continuous  function on $U\times \operatorname{Gr}(p,n)$ for any coordinate neighborhood $U$. Consequently, there is a constant $\hat{\kappa}>0$ such that $S_{1,-1,p}\geq \hat{\kappa}$ on some open set $W$. Applying Proposition \ref{prop-integral-inequality-1} to the reflexive extension $\mF$ of $f^*K_Y$ yields that
		$$0\geq-(\frac{1}{p}-\frac{2}{p(p+1)})\int_X|\p\psi|^2\cdot\w^n\geq\hat{\kappa}\int_{W}\psi^2\cdot\w^n>0,$$
		where the last inequality follows from that $\{\psi=0\}:=\bigcup_\alpha\big(\{\eta_\alpha=0\}\cup\{\varphi_\alpha=-\infty\}\big)$ is a set of Lebesgue measure zero. This gives a contradiction, completing the proof.
	\end{proof}
	
	\subsection{Splitting theorem of the tangent bundle}\label{section-splitting} In this subsection, we prove the following splitting theorem, which is key ingredient of the structure theorems. The core of the proof is constructing a parallel $(1,1)$-form, which is based on the Bochner-type inequality \eqref{equa-integral-inequality-1} and the second variation argument in \cite{ni2021}.
	\begin{theorem}\label{thm-splitting}
		Let $(X,g)$ be a compact K\"ahler manifold with $\sca_{a,b,k}\geq0$ where $a\geq0$, $a+\frac{2}{k+1}b>0$ and $k\geq1$. Assume that there exists a meromorphic map $f:X\dashrightarrow Y$ from $X$ to a compact K\"ahler manifold $Y$ with $K_Y$ pseudoeffective such that $p:=\dim Y\geq k$. Let $Z$ be the indeterminacy of $f$ and $X_o:=X\setminus Z$. Then $f$ is non-degenerate on $X_o$ and there exists an integrable subbundle $V$ of $T_X$ such that $V$ is invariant by the action of the holonomy group and $V=T_{X/Y}$ on $X_o$. In particular,
        \begin{itemize}
            \item [(a)] $T_X$ admits a holomorphic orthogonal splitting $V\oplus V^\perp$.
            \item [(b)] $V$ and $V^\perp$ are integrable, and therefore $R(u,\bar{u},v,\bar{v})=0$ for any $u\in V, v\in V^\perp$ by de Rham decomposition. Furthermore, we have
        		$
        		\rc(u,\bar{u})=0
        		$
        		for all $u\in V.$ 
        \end{itemize}
	\end{theorem}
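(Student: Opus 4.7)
The plan is to adapt the approach of \cite{ZZ25} from $\hsc\geq 0$ to the mixed setting $\mS_{a,b,k}\geq 0$: construct a parallel real $(1,1)$-form on $X$ whose kernel recovers the relative tangent distribution of $f$, and then invoke the holonomy principle to split $T_X$ and read off the remaining conclusions. First I would apply Lemma \ref{lem-MRCfibration} to $f$ to obtain a pseudoeffective invertible subsheaf $\mF\subset\mO(\Omega^{p,0}_X)$, namely the reflexive extension of $(f|_{X_o})^*K_Y$, equipped with a singular pseudoeffective metric $h_0$. Over $X_o$, a local generator of $\mF$ is a decomposable holomorphic $(p,0)$-form $\alpha=f^*\Omega_Y$ whose annihilator in $T_X$ is exactly $T_{X/Y}$, and whose associated $p$-plane $\Sigma(x)\subset T_xX$ is the $g$-orthogonal complement of $T_{X/Y,x}$.

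Second, I would equip $\mF$ with the smooth Hermitian metric $h_g$ induced from $g$ via the inclusion in $\wedge^p T_X^*$. Combining the Griffiths curvature formula for subbundles with the Chern curvature of $\wedge^p T_X^*$ evaluated on a decomposable element, one obtains a pointwise inequality of the shape
$$i\Theta_{h_g}(\mF)(x)\ \leq\ -C\,\mS_{a,b,p}\bigl(x,\Sigma(x)\bigr)\,\w(x),$$
with $C>0$ coming from $a\ge 0$ and $a+\tfrac{2}{k+1}b>0$. A Fubini-type averaging over $k$-dimensional subplanes of $\Sigma(x)$ promotes the hypothesis $\mS_{a,b,k}\ge 0$ to $\mS_{a,b,p}\ge 0$ (since $p\ge k$), so $i\Theta_{h_g}(\mF)\le 0$. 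Comparing $h_g$ with $h_0$, the global function $u:=\log(h_g/h_0)$ satisfies $i\pp u\ge 0$ as a current on the compact K\"ahler manifold $X$ and is therefore constant, forcing $i\Theta_{h_g}(\mF)\equiv 0$ and pointwise $\mS_{a,b,p}(x,\Sigma(x))\equiv 0$.

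Third, equality in Griffiths forces the second fundamental form of $\mF\hookrightarrow\Omega^{p,0}_X$ to vanish and the induced connection on $\mF$ to be flat; consequently $\alpha$, viewed as a section of $\Omega^{p,0}_X\otimes\mF^{-1}$, is $\nabla$-parallel on $X_o$, and by $\codim_X Z\ge 2$ together with reflexivity of $\mF$ and a Hartogs-type extension for parallel tensors, this parallelism extends across $Z$. Pairing $\alpha$ with its conjugate via the flat isomorphism $\mF\otimes\overline\mF\cong\mO_X$ coming from $h_g$, and then applying $\duallefs_\w^{p-1}$, yields a global smooth parallel real $(1,1)$-form $\eta$ on $X$. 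Its kernel $V\subset T_X$ is a holomorphic parallel subbundle that coincides with $T_{X/Y}$ on $X_o$; non-vanishing of $\alpha$ (its $h_g$-norm being a parallel, hence constant, function on $X$) already forces $f|_{X_o}$ to be non-degenerate. The orthogonal complement $V^\perp$ is the other eigenbundle of $\eta$ and is likewise parallel, giving the orthogonal holomorphic splitting in (a). Integrability of $V$ follows from $V=T_{X/Y}$ on the dense $X_o$, while integrability of $V^\perp$ together with $R(u,\bar u,v,\bar v)=0$ for $u\in V,\,v\in V^\perp$ comes from the de Rham decomposition applied to $X_{\mathrm{univ}}$. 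Finally, the identity $\mS_{a,b,p}(x,V^\perp)\equiv 0$ combined with $a\ge 0$ and $a+\tfrac{2}{k+1}b>0$ yields $\rc(u,\bar u)=0$ on $V^\perp$ by the same averaging argument.

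The main obstacle I expect lies in the curvature step: producing a clean Griffiths-type inequality that assembles the mixed Ricci and holomorphic sectional contributions into a single quantity proportional to $\mS_{a,b,p}$ with the correct combinatorial coefficient $a+\tfrac{2}{k+1}b$, and then tracking equality in the resulting chain of inequalities to upgrade flatness of the quotient line bundle $\mF$ to genuine parallelism of the entire inclusion $\mF\hookrightarrow\Omega^{p,0}_X$, together with extending this parallel datum smoothly across the codimension-$\geq 2$ indeterminacy locus $Z$.
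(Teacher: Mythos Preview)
Your overall architecture---promote $\mS_{a,b,k}\ge 0$ to $\mS_{a,b,p}\ge 0$, compare the singular pseudoeffective metric on $\mF$ with the smooth metric induced by $g$, force the ratio to be constant, and then read off a parallel $(1,1)$-form---matches the paper's strategy. The gap is in the heart of the argument, the claimed pointwise inequality
\[
i\Theta_{h_g}(\mF)\ \le\ -C\,\mS_{a,b,p}\bigl(x,\Sigma(x)\bigr)\,\w.
\]
The Griffiths formula for the line subbundle $\mF\subset\Omega^{p,0}_X$ only produces, in coordinates adapted to $\Sigma$, the partial-Ricci term
\[
i\Theta_{h_g}(\mF)(u,\bar u)\;=\;-\sum_{j=1}^{p}R_{u\bar u j\bar j}\;-\;|B_{\bar u}|^2,
\]
and there is no way to bound $\sum_{j=1}^{p}R_{u\bar u j\bar j}$ below by a positive multiple of $\mS_{a,b,p}(x,\Sigma)|u|^2$ for general $a,b$. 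Already for $a=0$, $b=1$ (the $\sca_p$ case) the right-hand side is $\frac{2}{p(p+1)}\sum_{i,j\le p}R_{i\bar i j\bar j}$, which controls neither the directions $u\notin\Sigma$ nor the cross terms $\sum_{i>p}\sum_{j\le p}R_{i\bar i j\bar j}$ appearing after a trace. So neither the $(1,1)$-form inequality nor its trace holds, and the conclusion that $u=\log(h_g/h_0)$ is psh on $X$ does not follow. The paper avoids this by working with the globally bounded function $\psi=e^{\varphi_\alpha}|\eta_\alpha|^2_g$ and combining \emph{two} different contractions of the Bochner identity: the full Laplacian $\psi\Delta\psi$, which produces $\sum_{i\le p}R_{i\bar i}$, and the partial contraction $p\langle i\pp\psi,e^{\varphi_\alpha}\beta_\alpha\rangle_\w$ against the auxiliary form $\beta_\alpha=\duallefs^{p-1}\bigl((\sqrt{-1})^{p^2}\eta_\alpha\wedge\overline{\eta_\alpha}/p!\bigr)$, which produces $\sum_{i,j\le p}R_{i\bar i j\bar j}$. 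Taking the linear combination with weights $a$ and $\frac{2b}{p+1}$ is exactly what assembles $\mS_{a,b,p}(x,\Sigma)$ in the integrand and yields $-\int_X|\partial\psi|^2\,\w^n\ge 0$; the coefficient $a+\frac{2b}{k+1}>0$ enters precisely here. A single Griffiths-type bound cannot substitute for this two-term identity.

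A second, smaller gap is your last sentence: knowing only $\mS_{a,b,p}(x,V^\perp)\equiv 0$ gives, after the de~Rham split, just the vanishing of the \emph{scalar} curvature of the $V^\perp$-leaves, not $\rc(u,\bar u)=0$ for every $u\in V^\perp$. In the paper this comes instead from the direction-by-direction equality $\sum_{j\le p}R_{u\bar u j\bar j}=0$ for \emph{all} $u\in T_xX$, which is extracted from the pointwise Bochner identity once $\psi$ is known to be constant (and, in the $a=0$ case, via an additional variation argument). You would need to recover that stronger statement before concluding $\rc|_{V^\perp}\equiv 0$.
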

	
	\begin{proof}   
        Let $\mF$ be the invertible subsheaf of $\Omega_X^p$ given by the reflexive extension of $f^*K_Y$ and $\psi$ be the associated Hermitian ratio of $\mF$ given by Definition \ref{defn-psi}. We adopt the notations in Section \ref{Hermitian ratio}. Note that $\sca_{a,b,p}\geq0$ by Lemma \ref{lem-k-k+1} and $a+\frac{2}{p+1}b>0$. Applying Proposition \ref{prop-integral-inequality-1} to $\mF$ yields
        $$-\int_X|\p\psi|_\w^2\cdot\omega^n\geq0.$$
        Then $\p\psi=0$ in $L^2(X,\Lambda^{1,0}T_X^*)$ and so $\psi= C$ for some constant $C\geq0$ almost everywhere. Note that $U_\alpha\cap(\{\varphi_\alpha=-\infty\}\cup \{\eta_\alpha=0\})$ has zero  measure. Thus $C>0$, we conclude that $|\eta_\alpha|_\w^2>0$ since $\varphi_\alpha$ is bounded from above and $\varphi_\alpha=\log C-\log|\eta_\alpha|^2$ is smooth. A direct consequence of $|\eta_\alpha|^2>0$ is that $df$ is non-degenerate on $X_o$ in virtue of \eqref{equa-form-localexpression}.  
        
        Proposition \ref{prop-Bochner} implies that for any $x_0\in X$, there exists a holomorphic coordinate $(z^1,\cdots,z^n)$ centered at $x_0$ such that $\eta_\alpha(x_0)=t_{\alpha,x_0}dz^1\wedge\cdots\wedge dz^p$ for some $t_{\alpha,x_0}\in C^*$ and $\w(x_0)=\im\sum\limits_{i=1}^n dz^i\wedge d\bar{z}^i$; moreover, for any $u\in T_{X,x_0}$, it holds that
        	\begin{equation}\label{equa-boch3}
        		0=\psi^{-1}\p_u\p_{\bar{u}}\psi=\p_u\p_{\bar{u}}\varphi_\alpha+\frac{1}{|\eta_\alpha|^2}|D'_u\eta_\alpha+\eta_\alpha\p_u\varphi_\alpha|^2+\sum_{j=1}^p R_{u\bar{u}j\bar{j}}.
        	\end{equation}
        since $\psi$ is a constant and $\varphi_\alpha$ is smooth. We shall derive all needed geometric information from \eqref{equa-boch3}.
        \begin{lemma}\label{lem-geometric-imformation}
        	\begin{equation}\label{equa-parallel}
        		\im\pp\varphi_\alpha=0,\ \ D'\eta_\alpha+\eta_\alpha\p\varphi_\alpha=0.
        	\end{equation}
        	and \begin{equation}\label{equa-ricci-vanish}
        		\sum\limits_{j=1}^pR_{u\bar{u}j\bar{j}}=0  \text{\ \ \ for all\ }u\in T_{X,x_0}.
        	\end{equation}
        	for all $u\in T_{X,x_0}$
        \end{lemma}
        
        \eqref{equa-ricci-vanish} can be deduced from \eqref{equa-boch3} and \eqref{equa-parallel}. It suffices to verify $\eqref{equa-parallel}$.
        
        \begin{proof}[Proof of Lemma \ref{lem-geometric-imformation}]
        	 The discussion will be divided into two cases. Let $\Sigma:=\spaned\{\frac{\p}{\p z^i}\}_{i=1}^p$.
        	{\em Case 1: $a>0$.}  By $\sca_{a,b,p}\geq0$ and Lemma \ref{lem-k-k+1}, we have
        	\begin{equation}\label{equa-case1}
        		\begin{split}
        			0\leq & \sca_{a,b,p}(x,\Sigma)=\frac{a}{p}\sum\limits_{i=p+1}^n\sum\limits_{j=1}^pR_{i\bar{i}j\bar{j}}+\left(\frac{a}{p}+\frac{2b}{p(p+1)}\right)\sum\limits_{i,j=1}^pR_{i\bar{i}j\bar{j}}\\
        			= &-\frac{a}{p}\sum\limits_{i=p+1}^n(\p_i\p_{\bar{i}}\varphi_\alpha+\frac{1}{|\eta_\alpha|_g^2}|D'_i\eta_\alpha+\eta_\alpha\p_i\varphi_\alpha|^2)\\
        			&-\left(\frac{a}{p}+\frac{2b}{p(p+1)}\right)\sum\limits_{i=1}^p(\p_i\p_{\bar{i}}\varphi_\alpha+\frac{1}{|\eta_\alpha|_g^2}|D'_i\eta_\alpha+\eta_\alpha\p_i\varphi_\alpha|^2)
        		\end{split}
        	\end{equation}
        	where the last equality follows from the linear combinations of \eqref{equa-boch3}. Combining \eqref{equa-case1} and $\im\pp\varphi_\alpha\geq0$ yields \eqref{equa-parallel}, which implies \eqref{equa-ricci-vanish}.
        	
        	{\em Case 2: $a=0$.} In this case, $\sca_{a,b,p}\geq0\Leftrightarrow \sca_p\geq0$. Applying \eqref{equa-boch3} to $u=\frac{\p}{\p z_i}$ for $i=1,\cdots,p$ gives
        	\begin{align*}
        		0\leq & \sca_p(x_0,\Sigma)=\sum\limits_{i,j=1}^pR_{i\bar{i}j\bar{j}}=-\sum\limits_{i=1}^p(\p_i\p_{\bar{i}}\varphi_\alpha+\frac{1}{|\eta_\alpha|_g^2}|D'_i\eta_\alpha+\eta_\alpha\p_i\varphi_\alpha|^2)\leq0.
        	\end{align*}
        	Since $\im\pp\varphi_\alpha\geq0$, it follows that
        	\begin{equation}\label{equa-flat-1}
        		\p_i\p_{\bar{i}}\varphi_\alpha=0 \text{ and }  D'_i\eta_\alpha+\eta_\alpha\p_i\eta_\alpha=0 \text{ for } i=1,\cdots,p.
        	\end{equation}
            Then $ \sca_p(x_0,\Sigma)=\sum\limits_{i,j=1}^pR_{i\bar{i}j\bar{j}}=0$ and in particular $\sca_p(x_0,\cdot)$ attain its minimum at $\Sigma=\{\frac{\p}{\p z^1},\cdots,\frac{\p}{\p z^p}\}$. Applying the variation argument from \cite[Proposition 4.2]{ni2021}, we conclude that $\sum\limits_{j=1}^pR_{i\bar{i}j\bar{j}}\geq0$ for $i=p+1,\cdots,n$. Substituting $u=\frac{\p}{\p z^i}$ into \eqref{equa-boch3} for $i=p+1,\cdots n$, we get 
        	\begin{equation}\label{equa-flat-2}
        		\p_i\p_{\bar{i}}\varphi_\alpha=0 \text{ and }  D'_i\eta_\alpha+\eta_\alpha\p_i\eta_\alpha=0 \text{ for } i=p+1,\cdots,n.
        	\end{equation}
        	Combining \eqref{equa-flat-1} and \eqref{equa-flat-2} yields \eqref{equa-parallel}, which completes the proof.
        \end{proof}
            
        	Since $\im\pp\varphi_\alpha=0$, $\varphi_\alpha$ can be written as $t_\alpha+\overline{t_\alpha}$ for some holomorphic function $t_\alpha\in \mO(U_\alpha)$. Then $e^{t_\alpha}\eta_\alpha$ generates $\mF|_{U_\alpha}$ and we have
        	\begin{equation}
        		D(e^{t_\alpha}\eta_\alpha)=e^{t_\alpha}(D'\eta_\alpha+\eta_\alpha \p\varphi_\alpha)=0.
        	\end{equation}
        	In the remaining proof, we argue as in \cite[Section 3]{ZZ25}. Consider a global real $(1,1)$-form
        	\begin{align*}
        		\beta=\sum\limits_{i,j=1}^n\langle\iota_{\frac{\p}{\p z^i}}(e^{t_\alpha}\eta_\alpha),\overline{\iota_\frac{\p}{\p z^j}(e^{t_\alpha}\eta_\alpha)}\rangle_g dz^i\wedge d\bar{z}^j.
        	\end{align*}
        	Since $D(e^{t_\alpha}\eta_\alpha)=0$, it can be easily seen that $\beta$ is parallel with respect to $D$. Then we can define a parallel and self-adjoint linear transformation $\mathrm{P}:T_X\rightarrow T_X$ by $\langle \mathrm{P}(\cdot),\cdot\rangle_g=\beta(\cdot,{\cdot})$. Therefore, we obtain the decomposition
        	$$T_X\cong V\oplus W\ \ \text{on}\ X,$$
        	where $V$ and $W$ are generated by the eigenvectors corresponding to zero and nonzero eigenvalues of $\mathrm{P}$. Since $\mathrm{P}$ is parallel, $V$ and $W$ are the parallel distributions with respect to $D$, i.e., subbundles that are invariant by the parallel displacement.  By virtue of the de Rham decomposition of K\"ahler manifolds, the universal cover $\pi:(X_{\mathrm{univ}},g_{\mathrm{univ}})\rightarrow(X,g)$ holomorphically and isometrically splits as $$(X_{\mathrm{univ}},g_{\mathrm{univ}})\simeq(X_1,g_1)\times(X_2,g_2)$$ such that $T_{X_1}=\pi^*V$ and $T_{X_2}=\pi^*W$. In particular, $V\oplus W$ is the holomorphic orthogonal decomposition of $T_X$.

        	Recall that $T_{X/Y}$ is defined by the kernel of $df$, thus $T_{X/Y,x_0}=\spaned\{\frac{\p}{\p z^i}\}_{i=p+1}^n$ and $T_{X/Y,x_0}^\perp=\spaned\{\frac{\p}{\p z^i}\}_{i=1}^p$ using the holomorphic coordinates in the proof of Proposition \eqref{prop-Bochner} (1). By a direct computation, the explicit expression of $\beta$ at a point $x_0$ is
        	\begin{equation}\label{equa-localexpression}
        		\beta=\im\frac{|e^{t_\alpha}\eta_\alpha|^2}{p}\sum\limits_{i=1}^pdz^i\wedge d\bar{z}^i.
        	\end{equation} 	
        	This implies that
                $$T_{X/Y}=V|_{X_o}\ \ \text{and}\ \ T_{X/Y}^\perp=W|_{X_o}.$$
                 Thus, $R(u,\bar{u},v,\bar{v})=0$ for all $u\in T_{X/Y},v\in T_{X/Y}^\perp$.  Combining with \eqref{equa-ricci-vanish}, we get
        	\begin{equation}
        		\rc(u,\bar{u})=\sum\limits_{j=1}^nR_{u\bar{u}j\bar{j}}=\sum\limits_{j=1}^pR_{u\bar{u}j\bar{j}}+\sum\limits_{j=p+1}^nR_{u\bar{u}j\bar{j}}=0,
        	\end{equation}
        	and the proof is completed.
	\end{proof}

   \subsection{Proof of Theorem \ref{main-thm-partially}} Upon building the splitting theorem (Theorem \ref{thm-splitting}), the remaining part follows Matsumura's strategy on semi-positive holomorphic sectional curvature (\cite{matsumura2022}). In sake of completeness, we provide a brief proof. Combining Lemma \ref{lem-morphism} and Theorem \ref{thm-splitting}, we arrive at the following result.
     \begin{theorem}\label{thm-goodfibration}
         Let $(X,g)$ be a K\"ahler manifold with $\sca_{a,b,k}\geq 0$ for some $a+\frac{2b}{k+1}$ and $\rd(X)\leq n-k$. We can choose a MRC fibration of $X$ that is holomorphic.
     \end{theorem}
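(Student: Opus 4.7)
The plan is to apply Theorem \ref{thm-splitting} to a meromorphic MRC fibration of $X$, and then promote the resulting integrable distribution to a genuine holomorphic submersion via Lemma \ref{lem-morphism}. If $X$ is not uniruled, the MRC fibration is the identity $X\to X$ and there is nothing to prove, so I assume $X$ is uniruled, i.e.\ $K_X$ is not pseudoeffective. Lemma \ref{lem-MRCfibration} then supplies an almost holomorphic MRC fibration $f:X\dashrightarrow Y$ with $Y$ compact K\"ahler and $K_Y$ pseudoeffective, and the hypothesis $\rd(X)\leq n-k$ translates into $\dim Y\geq k$, which is exactly the dimension condition needed to invoke Theorem \ref{thm-splitting}.

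Applying Theorem \ref{thm-splitting}, I obtain a holomorphic orthogonal splitting $T_X=V\oplus W$ into integrable subbundles with $V=T_{X/Y}$ over $X_o:=X\setminus Z$, where $Z$ is the indeterminacy locus of $f$. To feed this into Lemma \ref{lem-morphism}, I next exhibit a compact rationally connected leaf of the foliation defined by $V$. Since $f$ is almost holomorphic, a very general fiber $F$ of $f$ is compact, entirely contained in $X_o$, and rationally connected by the defining property of the MRC fibration; because $V|_{X_o}=T_{X/Y}$, such an $F$ is precisely a leaf of $V$. Lemma \ref{lem-morphism} then produces a holomorphic submersion $\tilde{f}:X\to\tilde{Y}$ onto a compact K\"ahler manifold $\tilde{Y}$ with $T_{X/\tilde{Y}}=V$.

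It remains to verify that $\tilde{f}$ qualifies as an MRC fibration. Over $X_o$, the fibers of $\tilde{f}$ are the leaves of $V=T_{X/Y}$ and therefore coincide with the connected components of the fibers of $f$. In particular the general fibers of $\tilde{f}$ are the rationally connected general fibers of $f$, and this yields a birational identification $\tilde{Y}\dashrightarrow Y$ making $\tilde{f}$ into a holomorphic model of the original MRC fibration. The only delicate point in this argument is this last identification: one must ensure that the distribution $V$ produced abstractly by Theorem \ref{thm-splitting} genuinely recovers the geometry of $f$ away from $Z$, which is precisely guaranteed by the equality $V=T_{X/Y}$ on the dense open set $X_o$.
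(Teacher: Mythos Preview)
Your proposal is correct and follows essentially the same route as the paper: apply Theorem \ref{thm-splitting} to an MRC fibration to obtain the integrable subbundle $V$ with $V|_{X_o}=T_{X/Y}$, observe that a general fibre of $f$ is a compact rationally connected leaf of $V$, invoke Lemma \ref{lem-morphism} to produce a holomorphic submersion $\tilde f:X\to\tilde Y$ with $T_{X/\tilde Y}=V$, and then use the identification of general fibres to get a bimeromorphism $\tilde Y\dashrightarrow Y$ showing $\tilde f$ is a holomorphic MRC fibration. The only cosmetic differences are that you treat the non-uniruled case explicitly and say ``birational'' where ``bimeromorphic'' is more appropriate in the K\"ahler setting.
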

     
     \begin{proof}[Proof of Theorem \ref{thm-goodfibration}]
         Let $f:X\dashrightarrow Y$ be a MRC fibration of $X$. By virtue of Theorem \ref{thm-splitting}, there exists an integrable subbundle $V\subset T_X$ such that $V|_{X_o}=T_{X/Y}$. As $f$ is a MRC fibration, then the general leaf of $V$ is compact and rationally connected. By applying Lemma \ref{lem-morphism}, there exists a smooth submersion $f':X\rightarrow Y'$ such that $V=T_{X/Y'}$. A general fibre of $f$ is a leaf of $T_{X/Y'}$. Therefore there exists a bimeromorphism $\pi:Y\rightarrow Y'$ such that
         \[
         \begin{tikzcd}
             X \arrow[d,"f",dashrightarrow] \arrow[dr,"f'"] & \\
             Y \arrow[r,"\pi",dashrightarrow] & Y',
         \end{tikzcd}
         \]
         which implies that $Y'$ is not uniruled and $f'$ is the desired MRC fibration.
     \end{proof}

     The proof of Theorem \ref{main-thm-partially} is completed by showing the following statement and then applying it to a holomorphic MRC fibration constructed by Theorem \ref{thm-goodfibration}.
    
    \begin{theorem}\label{thm-structure-1}
    	Let $(X,g)$ be a compact K\"ahler manifold with $\sca_{a,b,k}\geq0$ where $a+\frac{2b}{k+1}\geq0$. Assume that there exists a surjective holomorphic map $f:X\rightarrow Y$ to a compact K\"ahler manifold $Y$ with $K_Y$ pseudo-effective such that $\dim Y\geq k$. The following statement holds:
    	\begin{itemize}
    		\item[(1)] $f$ is locally constant and induces a holomorphic orthogonal splitting of the short exact sequence
    		$$0\rightarrow (T_{X/Y},g_S)\rightarrow (T_X,g)\xrightarrow{df_*} (f^*T_Y,g_Q) \rightarrow0,$$
    		i.e., the orthogonal completement $T_{X/Y}^\perp$ is a holomorphic vector bundle and the isomorphism $(df_*)^\vee:f^*T_Y\rightarrow T_{X\setminus Y}^\perp$ is analytic such that we have the holomorphic orthogonal decomposition
    		$$T_X=T_{X/Y}\oplus T_{X/Y}^\perp=T_{X/Y}\oplus (df_*)^\vee(f^*T_Y)\cong T_{X/Y}\oplus f^*T_Y.$$
    		
    		\item[(2)] $Y$ supports a Ricci-flat K\"ahler metric $g_Y$.

    		\item[(3)] Let $F$ be the fibre of $f$ and $F_{\mathrm{univ}}$ be its universal cover. Then there exists a K\"ahler metric $g_{F_{\mathrm{univ}}}$ such that the universal cover $(X_{\mathrm{univ}},g_{\mathrm{univ}})$ of $(X,g)$ splits holomorphically and isometrically as
    		$$(X_{\mathrm{univ}},\mu^*g)\simeq (Y_{\mathrm{univ}},\pi^*g_Y)\times (F_{\mathrm{univ}},g_{F_\mathrm{univ}}).$$
    	\end{itemize} 
    \end{theorem}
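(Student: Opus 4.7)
The plan is to derive Theorem \ref{thm-structure-1} from Theorem \ref{thm-splitting} combined with Ehresmann's theorem in the form of Lemma \ref{lem-integrability} and the de Rham decomposition for K\"ahler manifolds. Because $f:X\to Y$ is a holomorphic surjection between smooth compact K\"ahler manifolds, the indeterminacy locus is empty and $X_o=X$, so Theorem \ref{thm-splitting} applies directly to produce a holomorphic orthogonal decomposition $T_X=V\oplus V^\perp$ with $V=T_{X/Y}$, where both $V$ and $V^\perp$ are integrable and parallel with respect to the Levi-Civita connection, and $\rc(u,\bar u)=0$ for every $u\in V^\perp$.

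For statement (1), the natural $\mathscr{C}^\infty$-isomorphism
\[
V^\perp\hookrightarrow T_X\xrightarrow{df_*}f^*T_Y
\]
is in fact holomorphic since both bundles and $df_*$ are holomorphic and $\ker df_*=V$, giving the desired holomorphic orthogonal splitting $T_X\cong T_{X/Y}\oplus f^*T_Y$. Since $V^\perp$ is an integrable subbundle complementary to $T_{X/Y}$, Lemma \ref{lem-integrability} (applied with the horizontal connection $W=V^\perp$) implies at once that $f$ is a locally constant fibration.

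For statements (2) and (3), I would invoke the de Rham decomposition: because $V$ and $V^\perp$ are holomorphic parallel subbundles of $T_X$, the universal cover splits holomorphically and isometrically as
\[
(X_{\mathrm{univ}},\mu^*g)\simeq(X_1,g_1)\times(X_2,g_2),
\]
with $T_{X_1}=\pi^*V$ and $T_{X_2}=\pi^*V^\perp$. The canonicity of this decomposition, together with the identifications $\mu^*V\cong \mathrm{pr}_2^*T_{F_{\mathrm{univ}}}$ and $\mu^*V^\perp\cong \mathrm{pr}_1^*T_{Y_{\mathrm{univ}}}$ coming from Lemma \ref{lem-integrability}, forces $X_1=F_{\mathrm{univ}}$ and $X_2=Y_{\mathrm{univ}}$. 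The factor metric $g_2$ on $Y_{\mathrm{univ}}$ is then $\pi_1(Y)$-invariant and descends to a K\"ahler metric $g_Y$ on $Y$ with $\pi^*g_Y=g_2$. Since the Ricci tensor of a product K\"ahler metric is the direct sum of the factor Ricci tensors and $\rc_g$ already vanishes on $V^\perp$, the metric $g_2$, and hence $g_Y$, is Ricci-flat, finishing (2). Finally, the curvature vanishing $R(u,\bar u,v,\bar v)=0$ for $u\in V$, $v\in V^\perp$ from Theorem \ref{thm-splitting}(b) ensures that the curvature of $(F_{\mathrm{univ}},g_1)$ agrees with that of $(X_{\mathrm{univ}},\mu^*g)$ restricted to $V$-directions, so the induced metric $g_{F_{\mathrm{univ}}}=g_1$ inherits $\mathcal{C}_{a,b}\geq 0$ from the ambient $(X,g)$.

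The principal obstacle is the careful verification of the descent and the matching of two a priori distinct decompositions of $X_{\mathrm{univ}}$: one needs that the de Rham factorization coincides canonically with the product structure $Y_{\mathrm{univ}}\times F_{\mathrm{univ}}$ supplied by the locally constant fibration, and that the $\pi_1(Y)$-action (through monodromy) preserves the factor metric on $Y_{\mathrm{univ}}$ so that $g_Y$ is well-defined. Both of these rest on the uniqueness and canonicity of the parallel holomorphic decomposition $T_X=V\oplus V^\perp$ produced by Theorem \ref{thm-splitting}, which pins down the factors of the de Rham splitting unambiguously.
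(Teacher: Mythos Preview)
Your proposal is essentially correct and uses the same two ingredients as the paper---Theorem \ref{thm-splitting} to produce the parallel holomorphic splitting $T_X=V\oplus V^\perp$ with $V=T_{X/Y}$, and Lemma \ref{lem-integrability} to conclude that $f$ is locally constant. For statement (1) your argument coincides with the paper's.

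The one genuine methodological difference lies in statement (2). The paper does \emph{not} go through the universal cover to produce $g_Y$; instead it constructs $g_Y$ directly on $Y$ by setting $g_Y(u,\bar v):=g_Q(f^*u,\overline{f^*v})$, where $g_Q$ is the quotient metric on $f^*T_Y$, checking that this is constant along fibres (via the curvature vanishing in Theorem \ref{thm-splitting}(b)), observing that $f^*g_Y$ is a constant multiple of the parallel form $\beta$ so that $g_Y$ is K\"ahler, and then reading off Ricci-flatness from the Gauss--Codazzi formula \eqref{equa-gausscodazzi}. This sidesteps precisely the descent issue you flag as your ``principal obstacle''; your de~Rham-and-descent route works too, but the paper's direct construction is cleaner because it never leaves the base $Y$ and makes the K\"ahler and Ricci-flat properties immediate computations rather than consequences of a monodromy argument.

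One small caution on (3): you write that $g_{F_{\mathrm{univ}}}$ ``inherits $\mathcal{C}_{a,b}\geq 0$ from the ambient $(X,g)$'', but the ambient hypothesis is only $\mathcal{S}_{a,b,k}\geq 0$, which for $k>1$ is weaker than $\mathcal{C}_{a,b}\geq 0$. Passing from $\mathcal{S}_{a,b,k}\geq 0$ on $X$ to $\mathcal{C}_{a,b}\geq 0$ on the fibre requires using the additional information that the horizontal factor is flat in the relevant sense (Ricci-flat, with vanishing cross-curvature), not merely that the product decomposition holds. The paper itself defers this step to \cite[Pages 772--773]{matsumura2022}, so you are not expected to supply it here, but your one-line justification is not quite enough as stated.
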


     \begin{proof}[Proof of Theorem \ref{thm-structure-1}]
         By combining Lemma \ref{lem-integrability} and Theorem \ref{thm-splitting}, $f$ is locally constant, and statement (1) follows directly. Let $u,v$ be two holomorphic vector field on some open subset $U$ of $Y$, which induces two holomorphic section of $H^0(f^{-1}(U),f^*T_Y)$. We define $g_Y$ by
         $g_Y(u,\bar{v})=g_Q(f^*u,\overline{f^*v}).$ To check the well-definedness, it suffices to prove that for any $y\in Y$, $g_Q(f^*u,\overline{f^*v})$ is constant on the fibre $f^{-1}(y)$, which follows from
         $$\p_w\bp_wg_Q(f^*u,\overline{f^*v})\leq \langle R^{g_Q}(w,\bar{w})(f^*u),\overline{f^*v}\rangle_{g_Q}=R(w,\bar{w},f^*u,\overline{f^*v})=0$$
         for any $w\in T_{X/Y}=T_{f^{-1}(y)}$, where the second equality follows from the statement (1) and the third equality follows from Theorem \ref{thm-splitting} (b). Note that
         $f^*g_Y=\frac{\beta}{C}$ from the previous pointwise computation, where $C=|\psi|_g^2$ is a constant. Since $f$ is non-degenerate and $\beta$ is parallel, we deduce that $g_Y$ is K\"ahler. On the other hand, it follows from Theorem \ref{thm-splitting} (b)
         $$\rc^{g_Y}(u,\bar{u})=\sum\limits_{i=1}^p \langle R^{g_Y}(u,\bar{u})\frac{\p}{\p w^i},\overline{\frac{\p}{\p w^i}}\rangle_{g_Y}=\sum\limits_{i=1}^p\langle R^{g_Q}(f^*u,\overline{f^*u})f^*\frac{\p}{\p\w^i},\overline{f^*\frac{\p}{\p w^i}} \rangle_{g_Q}=0,$$
         where the last equality follows from the Gauss-Codazzi equation. The remaining proof is standard and we refer the readers to the argument in \cite[pp. 772-773]{matsumura2022}.
     \end{proof}

\vspace{0.1cm}

    \subsection{Structure theorems}\label{section-application}
     As applications of Theorem \ref{main-thm-partially}, in this section, we prove Theorem \ref{main-thm-kricci} and provide a complement of Chu-Lee-Zhu's result \cite{CLZ25} on non-negative mixed curvature. Here we write $\sca_k^g$ for the $k$-scalar curvature of the metric $g$. The notations $\rc_k^g$, etc., are defined analogously.
    \begin{proof}[Proof of Theorem \ref{main-thm-kricci}]
      By applying Theorem \ref{main-thm-partially} to $\sca_{1,0,k}$ and $\sca_{0,1,k}$, it suffices to show that $g_Y$ and $g_F$ satisfy the certain curvature condition. 
      
      When $\rc^g\geq_k0$, $\rc^{g_F}\geq0$ follows directly from the splitting of the metric $\mu^*g=\pi^*g_Y\times g_F$ and $\dim Y\geq k$. 
      
      Regarding $\sca_k^g\geq0$, $\sca_k^{g_Y}\equiv 0$ follows from a direct computation using the facts that Berger's trick applies, $\sca_k^g\geq0$ and $\sca_{\dim Y}^{g_Y}\equiv0$. While $\hsc^{g_F}=\sca_1^{g_F}\geq0$ follows from the fact that $\sca_{k+1}^g\geq0$, $\sca_k^{g_Y}\equiv0$, the splitting of the metric $\mu^*g=\pi^*g_Y\times g_F$ and $\dim Y\geq k$.
    \end{proof}
    
    \subsubsection{Mixed curvature} The mixed curvature $\mC_{a,b}$ are introduced in \cite{CLT22} as
    $$\mC_{a,b}(u,\bar{u})=a\rc(u,\bar{u})+\frac{b}{|u|_g^2}R(u,\bar{u},u,\bar{u}),\ \forall u\in T_X$$
    for providing an unified treatment for curvature conditions $\rc^\perp$, $\rc^+$ and $\rc_k$, those curvature conditions are considered by Ni-Zheng \cite{ni2018,ni2021,nicpam}. Chu-Lee-Zhu (\cite{CLZ25}) proved that the universal cover of a compact K\"ahler manifold $(X,g)$ with $\mC_{a,b}\geq0$ for $a>0$ and $a+b>0$ splits as
    $$(\C^k,g_{\C^k})\times\prod_i (X_i,h_i)$$
    where each $(X_j,h_j)$ are compact, simply connected and uniruled projective manifolds. However, it's unclear whether each $X_i$ is rationally connected. By applying Theorem \ref{main-thm-partially} to $\mC_{a,b}=\sca_{a,b,1}$, we completes their structure theorems in the sense of rational connectedness (Theorem \ref{thm-mixedcurvature}). For the quasi-positive case and the semi-positive case with irreducible holonomy, please refer to \cite{Tang2024,ni2025} and the references therein.
    
    \begin{theorem}\label{thm-mixedcurvature}
        Let $(X,g)$ be a compact K\"ahler manifold with $\mC_{a,b}\geq0$ for $a\geq0$ and $a+b>0$, then there exists a locally constant fibration $f:X\rightarrow Y$ from $X$ to a compact K\"ahler manfiold $Y$ such that
		\begin{itemize}
			\item[(1)] $Y$ supports a K\"ahler metric $g_Y$ with $\hsc\equiv0$. In particular, $Y$ is a compact complex torus.
			\item[(2)] The fibre $F$ is rationally connected and supports a K\"ahler metric $g_F$ such that we have the following holomorphically and isometrically splitting for the universal cover $\mu:X_{\mathrm{univ}}\rightarrow X$
			$$(X_{\mathrm{univ}},\mu^*g)\simeq (\C^k,g_{\C^k})\times(F,g_F).$$
		\end{itemize}
    \end{theorem}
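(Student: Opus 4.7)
The plan is to apply Theorem \ref{main-thm-partially} with $k=1$. First I would observe that $\mC_{a,b}(u,\bar{u})$ coincides with $\mS_{a,b,1}(x,\C\cdot u)$ by direct inspection of the definitions, so the hypothesis $a+b>0$ is exactly the requirement $a+\frac{2b}{k+1}>0$ at $k=1$. If $X$ is rationally connected then I take $Y$ to be a point and there is nothing to prove; otherwise $\rd(X)\leq n-1=n-k$, and Theorem \ref{main-thm-partially} supplies a locally constant fibration $f\colon X\to Y$ with $Y$ a compact K\"ahler manifold admitting a Ricci-flat metric $g_Y$, general fibre $F$ rationally connected, and the holomorphic isometric splitting $(X_{\mathrm{univ}},\mu^*g)\simeq(Y_{\mathrm{univ}},\pi^*g_Y)\times(F,g_F)$ of the universal cover.

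The main additional task is to upgrade the conclusion on $Y$ from Ricci-flat to a compact complex torus, which amounts to proving $\hsc_{g_Y}\equiv 0$. Using the product decomposition of the universal cover, any $u\in T_{Y_{\mathrm{univ}}}$ regarded as a horizontal tangent vector on $X_{\mathrm{univ}}$ satisfies $\Rm_{\mu^*g}(u,\bar{u},\cdot,\cdot)=\Rm_{\pi^*g_Y}(u,\bar{u},\cdot,\cdot)$, since the $F$-factor contributes nothing. Consequently $\mC_{a,b}(u,\bar{u})=a\,\rc_{g_Y}(u,\bar{u})+b\,\hsc_{g_Y}(u)=b\,\hsc_{g_Y}(u)\geq 0$ after invoking Ricci-flatness. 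When $b>0$ this yields $\hsc_{g_Y}\geq 0$ pointwise; but the scalar curvature of $g_Y$ equals a universal positive constant times the average of $\hsc_{g_Y}$ over unit tangent vectors at each point, and Ricci-flatness forces the scalar curvature to vanish identically, so $\hsc_{g_Y}\equiv 0$. A symmetric sign argument handles $b<0$.

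Once $\hsc_{g_Y}\equiv 0$, the standard polarization identity for the K\"ahler curvature tensor yields $\Rm_{g_Y}\equiv 0$, so $g_Y$ is flat; a compact flat K\"ahler manifold is a complex torus, which proves (2) and identifies $(Y_{\mathrm{univ}},\pi^*g_Y)\cong(\C^k,g_{\C^k})$. Statements (1), (3), and (4) then follow by substituting this identification into the corresponding conclusions of Theorem \ref{main-thm-partially}.

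The delicate point of the plan is the borderline case $b=0$ (which forces $a>0$): the hypothesis degenerates to $\rc\geq 0$, and Theorem \ref{main-thm-partially} only delivers $g_Y$ Ricci-flat, while $K3$ surfaces show that Ricci-flatness alone does not force $Y$ to be a torus. For this edge case I would bring in Chu-Lee-Zhu's de Rham-type splitting theorem \cite{CLZ25}, which under $\mC_{a,b}\geq 0$ with $a>0$ and $a+b>0$ directly produces a flat $\C^k$ factor in the universal cover, and then reconcile this Euclidean factor with the MRC picture of Theorem \ref{main-thm-partially} to recover the locally constant fibration onto a complex torus.
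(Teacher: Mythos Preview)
Your argument for $b\neq 0$ is essentially the paper's: both apply Theorem~\ref{main-thm-partially} at $k=1$ to obtain a Ricci-flat base $(Y,g_Y)$, observe that on horizontal directions $\mC_{a,b}$ reduces to $b\,\hsc_{g_Y}$, and then use Berger's averaging (the scalar curvature of $g_Y$ vanishes) to force $\hsc_{g_Y}\equiv 0$. The paper packages the averaging step slightly differently---it first concludes $\mC_{a,b}\equiv 0$ on $Y$ and then reads off $\hsc\equiv 0$---but the content is identical. Your case split (rationally connected versus $\rd(X)\leq n-1$) is in fact a bit cleaner than the paper's (uniruled versus not).

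You are right to isolate $b=0$ as a genuine gap, and the paper's proof does not close it either: from $\mC_{a,0}\equiv 0$ and $\rc_{g_Y}\equiv 0$ one learns nothing about $\hsc_{g_Y}$. However, your proposed rescue via \cite{CLZ25} cannot succeed, because the \emph{statement itself} fails at $b=0$: the K3 example you already invoke (simply connected, compact, Ricci-flat, not uniruled) satisfies $\mC_{a,0}=a\,\rc\geq 0$ for any $a>0$, yet admits no locally constant fibration onto a torus with rationally connected fibre---the only candidate map is the identity, and K3 is not a torus. So no splitting theorem can reconcile the picture; the correct repair is to add the hypothesis $b\neq 0$ (the pure $\rc\geq 0$ case is already handled by Theorem~\ref{main-thm-kricci} with the weaker conclusion that $Y$ is merely Ricci-flat), not to import an external result.
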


     \begin{proof}[Proof of Theorem \ref{thm-mixedcurvature}]
         Suppose that $X$ is not uniruled, then $K_X$ is pseudo-effective by \cite{Ou25}. Applying Theorem \ref{thm-structure-1} to the identity map $id:X\rightarrow X$ and $\mC_{a,b}=\sca_{a,b,1}$, we deduce that $\rc\equiv 0$, which implies that the scalar curvature $\sca\equiv0$. By the Berger's trick, we have that $\mC_{a,b}\equiv0$, and thus $\hsc\equiv0$ whenever $a=0$ or $a\neq0$. Thus $Y$ is compact complex torus. If $X$ is uniruled, by combining with Lemma \ref{lem-MRCfibration} and Theorem \ref{main-thm-partially}, it suffices to verify that the holomorphic sectional curvature of $g_Y$ is identically zero, which follows from the similar argument.
     \end{proof}
     
     We conclude this section with some remarks on $\rc^\perp\geq0$. Similarly to Theorem \ref{thm-mixedcurvature}, we present the following structure theorems.
     \begin{theorem}\label{thm-orthogonal-semipositive}
     	Let $(X,g)$ be a compact K\"ahler manifold with $\rc^\perp\geq0$, then one of the following situations occurs:
     	\begin{itemize}
     		\item $\rd(X)\geq n-1$.
     		\item There exists a locally constant fibration $f:X\rightarrow Y$ from $X$ to a compact K\"ahler manfiold $Y$ such that
     		\begin{itemize}
     			\item[(1)] $Y$ supports a K\"ahler metric $g_Y$ with $\hsc\equiv0$. In particular, $Y$ is a compact complex torus.
     			\item[(2)] The fibre $F$ is rationally connected and supports a K\"ahler metric $g_F$ such that we have the following holomorphically and isometrically splitting for the universal cover $\mu:X_{\mathrm{univ}}\rightarrow X$
     			$$(X_{\mathrm{univ}},\mu^*g)\simeq (\C^k,g_{\C^k})\times(F,g_F).$$
     		\end{itemize}
     	\end{itemize}
     \end{theorem}
     While we can conclude the rational connectedness in the quasi-positive case. Regarding $\rc^\perp\geq0$, the left hand term of the Bochner-type formula \eqref{equa-integral-inequality-1} vanishes in the case $\rd(X)=n-1$. Thus we cannot conclude that the hermitian ratio $\psi\equiv C$, and the proof of splitting theorem in Section \ref{section-splitting} fails. It seems that we need a new approach to obtain the complete structure theorems. Unfortunately, we haven't found such a way at the moment.

	\bigskip

	\bibliographystyle{plain}

\end{document}